\numberwithin{equation}{section}
\theoremstyle{plain}
\newtheorem{theorem}{Theorem}[section]
\newtheorem{lemma}[theorem]{Lemma}
\newtheorem{proposition}[theorem]{Proposition}
\newtheorem{corollary}[theorem]{Corollary}
\theoremstyle{definition}
\newtheorem{defn}{Definition}
\newtheorem{assumption}{Assumption}
\theoremstyle{remark}
\newtheorem{remark}{Remark}
\newcommand{\R}{\mathbb{R}}
\newcommand{\F}{\mathcal{F}}
\newcommand{\Prob}{\mathcal{P}}
\newcommand{\U}{\mathcal{U}}
\newcommand{\e}{\varepsilon}
\newcommand{\weak}{\rightharpoonup}
\newcommand{\G}{\mathcal{G}}
\newcommand{\NN}{\mathbb{N}}
\newcommand{\X}{\mathcal{X}}
\newcommand{\opt}{\mathrm{Opt}}
\newcommand{\Id}{\mathrm{Id}}
\newcommand{\supp}{\mathrm{supp}}
\begin{document}
\title[Normalizing flows as approximations of optimal transport maps]
{Normalizing flows as approximations of optimal transport maps via linear-control neural ODE\lowercase{s}}

\author[A. Scagliotti]{Alessandro Scagliotti}
\author[S. Farinelli]{Sara Farinelli}

\address[A.~Scagliotti]{}
\address{Technische Universit\"at M\"unchen, Garching b. M\"unchen, Germany}
\address{Munich Center for Machine Learning (MCML), Munich, Germany \vspace{0.25cm}}
\email{scag@ma.tum.de \vspace{0.5cm}}

\address[S.~Farinelli]{}
\address{DIMA-MalGa, University of Genoa, 16146 Genoa, Italy \vspace{0.25cm}}
\email{sara.farinelli@edu.unige.it}

\begin{abstract}
In this paper, we consider the problem of recovering the $W_2$-optimal transport
map T between absolutely continuous measures $\mu,\nu\in\mathcal{P}(\mathbb{R}^n)$ as the flow of a
linear-control neural ODE, where the control depends only on the time variable
and takes values in a finite-dimensional space. 
 We first show that, under suitable assumptions on $\mu,\nu$ and on the controlled vector fields governing the neural ODE, 
the optimal transport map is contained in the $C^0_c$-closure of the flows generated by the system. 
Then, we tackle the problem under the assumption that  only discrete approximations of 
$\mu_N,\nu_N$ of the original measures $\mu,\nu$ are available: we formulate approximated optimal control problems, and we show that their solutions 
give flows that approximate the original optimal transport map $T$. In the framework of generative models, the approximating flow constructed here can be seen as a `Normalizing Flow', which usually refers to the task of providing invertible transport maps between probability measures by means of deep neural networks.
We propose an iterative numerical scheme based on the Pontryagin Maximum Principle
for the resolution of the optimal control problem, resulting in a method for the practical computation of the approximated optimal transport map, and we test it on a two-dimensional example.
\vspace{12pt}

\subsection*{Keywords:} $\Gamma$-convergence, Optimal control, Optimal transport, Linear-control neural ODEs.

\subsection*{Mathematics Subject Classification:}
34H05, 49Q22, 49J45, 49M05.
\end{abstract}

\maketitle

\section*{Introduction}
\renewcommand{\theequation}{I.\arabic{equation}}

In this paper, we consider the problem of approximating the optimal transport map between compactly-supported probability measures in $\R^n$ by means of flows induced by linear-control systems. Namely, we consider controlled dynamical systems of the form
\begin{equation}\label{eq:intro_lin_ctrl}
    \dot x(t) = F(x(t))u(t) =
    \sum_{i=1}^k F_i(x(t)) u_i(t)
    \qquad \mbox{a.e. } t\in [0,1],
\end{equation}
where $F=(F_1,\ldots,F_k):\R^n \to \R^{n\times k}$ defines the controlled vector fields, and $u\in \U:=L^2([0,1],\R^k)$ is the control, which takes values in a finite-dimensional space and depends only on the time variable (i.e.~it is \emph{open loop}). 
The term `linear-control' indicates the linear dependence of the system in the controls,
{ which in turn guarantees that}
setting the time horizon as $[0,1]$ is not restrictive.
In our case, the object of interest is the diffeomorphism $\Phi_u:\R^n\to \R^n$, obtained as the terminal-time flow associated to \eqref{eq:intro_lin_ctrl} and corresponding to $u\in\U$. 
In particular, given two probability measures $\mu,\nu\in \mathcal{P}(\R^n)$ with compact support and denoting with $T:\supp(\mu)\to\supp(\nu)$ the optimal transport map with respect to the the $W_2$-distance, we aim at approximating $T$ with elements in  $\mathscr{F}:= \{ \Phi_u \mid u\in \U \}$.
The starting point of our analysis is represented by the controllability results obtained in \cite{AS1,AS2}. Here, the authors formulated the notion of \emph{Lie Algebra strong approximating property}, and they showed that, if the vector fields $F_1,\ldots,F_k$ satisfy it, then the flows in $\mathscr F$ are dense in the $C^0_c$-topology in the class of the diffeomorpisms isotopic to the identity.
In the first part of this work, we use the classical regularity theory of Monge Ampère equation (\cite{Ca1, Ca2}) to prove that, under suitable assumptions on $\mu,\nu$ and their densities, the $W_2$-optimal transport map $T$ is a diffeomorphism isotopic to the identity (Proposition~\ref{prop:isotopic}), paving the way to the approximation of $T$ through the flows contained in $\mathscr F$ (Corollary~\ref{cor:ot_map_approx}).

From a practical perspective, the most interesting scenario is the reconstruction of the optimal transport map when it is not explicitly known.
For example, in a \emph{data-driven approach}, one or both measures $\mu,\nu$ may be not directly available, and we may have access only to discrete approximations $\mu_N,\nu_N$, obtained, e.g., through empirical samplings. 
In this context, we mention the recent advances in  statistical optimal transport, and we refer the interested reader to \cite{LGL_21, Rigollet_21, MBNW_21}.
We also report the contribution \cite{PCFH_16}, where the authors propose an algorithm to \emph{learn} at the same time  an optimal coupling between $\mu_N,\nu_N$ and an approximated optimal transport map.  
In this paper, our goal consists in approximating the optimal transport map $T$ starting from a discrete optimal coupling $\gamma_N$ between $\mu_N$ and $\nu_N$.
Namely, using the flows induced by \eqref{eq:intro_lin_ctrl}, we define the functional $\F^{N,\beta}:\U\to\R$ as 
\begin{equation}\label{eq:intro_def_funct_N}
    \F^{N,\beta}(u):=
    \int_{\R^n\times \R^n} |\Phi_u(x) - y|_2^2 \,d\gamma_N(x,y)
    + \frac\beta2 \|u\|_{L^2}^2,
\end{equation}
where $\beta>0$ is a parameter that tunes the $L^2$-regularization, which is essential to provide coercivity.
In Theorem~\ref{thm:OT_map_appr}, we prove that, when $\mu_N\weak^*\mu$ and $\nu_N\weak^*\nu$ as $N\to\infty$, assuming that $\mu\ll \mathcal{L}_{\R^n}$, the sequence of functionals $(\F^{N,\beta})_N$ is $\Gamma$-convergent with respect to the $L^2$-weak topology to the functional
\begin{equation}\label{eq:intro_def_funct_lim}
    \F^{\infty, \beta}(u):=
    \int_{\R^n}|\Phi_u(x)-T(x)|_2^2 \, d\mu(x) + \frac\beta2
    \|u\|_{L^2}^2,
\end{equation}
where $T$ is the optimal transport map,  from $\mu$ to $\nu$.
Moreover, under the hypotheses that ensure that $T$ is contained in the closure of $\mathscr F$, it turns out that every minimizer $\hat u$ of $\F^{\infty,\beta}$ generates a flow $\Phi_{\hat u}$ that can be made arbitrarily close to $T$ in the $L^2_\mu$-norm, by setting $\beta$ small enough.
In this framework, the $\Gamma$-convergence result guarantees that, in practical applications where we deal with the discrete measures $\mu_N,\nu_N$, we can minimize \eqref{eq:intro_def_funct_N} in place of \eqref{eq:intro_def_funct_lim}. In fact, it is interesting to mention that the minimizers of $\F^{N,\beta}$ converge to the minimizers of $\F^{\infty,\beta}$ in the \emph{$L^2$-strong} topology, and not just in the weak sense.
This is due to the fact that, being the system \eqref{eq:intro_lin_ctrl} linear in the controls, the integral term in \eqref{eq:intro_def_funct_N}--\eqref{eq:intro_def_funct_lim} is continuous with respect to the $L^2$-weak convergence of the controls.
This property has been recently exploited also in \cite{S_deep,S_ens}, in problems related to diffeomorphisms approximation and simultaneous control of ensembles of systems, respectively.
The present paper can be read as a generalization of the approach proposed in \cite{S_deep}, where the task consisted in \emph{learning} an unknown diffeomorphism $\Psi:\R^n\to\R^n$ through a linear-control system. In \cite{S_deep}, the \emph{training data-set} was represented by the collections of observations $\{(x_j,y_j=\Psi(x_j))\}_{j=1,\ldots,N}$, with a clear and assigned bijection between the initial points $\{ x_j\}_{j=1,\ldots,N}$ and the targets $\{ y_j\}_{j=1,\ldots,N}$.
In the present situation, if we set $\supp(\mu_N):=\{x_1,\ldots,x_{N_1}\}$ and $\supp(\nu_N):=\{y_1,\ldots,y_{N_2}\}$, we cannot expect \emph{a priori} a bijection between the elements of the supports. However, a $W_2$-optimal transport plan $\gamma_N$ from $\mu_N$ to $\nu_N$ provides us with a \emph{weighted} correspondence between the supports, that we employ to formulate \eqref{eq:intro_def_funct_N}.
{Finally, it is worth mentioning that our approach can be pursued as well even when the coupling $\gamma_N$ has not been obtained by solving the discrete optimal transport problem between $\mu_N$ and $\nu_N$, as observed in Remark~\ref{rmk:nonopt_plans}.
}

In the last decades optimal transport has been employed in many applied mathematical fields, such as  Machine Learning \cite{CR12,FHS23}, {generative models} \cite{ACB,RKB}, and signal and data analysis \cite{ABGV_18,Thorpe_17}, to mention a few. 
Our investigation is closely related to a problem that, in the context of {generative models}, is known in the Machine Learning literature as \emph{Normalizing Flows}.
Namely, given $\mu,\nu \ll \mathcal{L}_{\R^n}$ with densities $\rho_\mu,\rho_\nu:\R^n\to \R_+$, the task consists in finding a change of variable, i.e.\ an invertible and differentiable map $\phi_{\mathbf{u}}:\R^n\to\R^n$ such that
\begin{equation}\label{eq:intro_NF}
    \rho_\nu(y) \approx \rho_\mu(\phi_\mathbf{u}^{-1}(y))
    \left|\det \nabla \phi_\mathbf{u} \big(\phi_\mathbf{u}^{-1}(y) \big) \right|^{-1},
\end{equation}
where $\mathbf{u}= (u_1,\ldots,u_L)\in \R^{d\times L}$, and $\phi_\mathbf{u}$ is a \emph{deep neural network} expressed as the composition of $L$ parametric elementary mappings (\emph{layers}) $\varphi_{u_1},\ldots,\varphi_{u_L}:\R^n\to\R^n$, i.e., 
$\phi_\mathbf{u}=\varphi_{u_L}\circ\ldots\circ\varphi_{u_1}$.
The tuning of the parameters $u_1,\ldots,u_L$ (\emph{training}) is performed by log-likelyhood maximization of \eqref{eq:intro_NF}.
For further details on this topic, we refer the reader to the review papers \cite{Papa_nf_21, Koby_nf_20}.
In the seminal works \cite{WE17,haber17} it was established a fundamental connection between Deep Learning and Control Theory, so that deep neural networks can be effectively modeled by control systems. This approach has been popularized in \cite{CRBD18} under the name \emph{neural ODEs}, and it is crucial for current development and understanding of Machine Learning (see, e.g., \cite{BCFH_23,CFS_23,EGPZ,RBZ_nODE}). 
In our formulation, the system \eqref{eq:intro_lin_ctrl} plays the role of a linear-control neural ODE.
In the framework of neural ODEs, the problem of Normalizing Flows has been recently tackled from a controllability perspective in \cite{RBZ_NF}, where the authors consider a nonlinear-control system and propose an explicit construction for the controls, so that the corresponding final-time flow is an approximate transport map between two assigned absolutely continuous measures $\mu,\nu\in \mathcal{P}(\R^n)$. We report that the maps obtained in \cite{RBZ_NF} are not aimed at being optimal.
Finally, in \cite{FGOP_20} the computation of a normalizing flow is carried out by learning Entropy-Kantorovich potentials, and in \cite{MDBCR_23} it is proposed a post-processing for trained normalizing flows to reduce their transport cost.
{We insist on the fact that the controls $u\in L^2([0,1],\R^k)$ considered in this paper take values in finite-dimensional spaces, as it is as well the case in \cite{AS1, AS2}, where the controllability results we rely on were established. 
On the other hand, in \cite{AgCapo, Capo}, the authors had previously investigated the controllability problem in the group of diffeomorphisms when allowing the controls to depend on the state-variable, i.e.~to have values in infinite-dimensional spaces.
The latter viewpoint has been fruitfully adopted in the framework of shape deformations \cite{Y19}, in particular with applications to imaging problems (see e.g.~\cite{ATTY, Via12}). }

This paper is organized as follows.\\
In Section~\ref{sec:prel}, we establish our notations and we collect some basic results in Optimal Transport and Control Theory, respectively. \\
In Section~\ref{sec:appr_OT_map}, we show that, under proper regularity assumptions on the measures $\mu,\nu$ and their densities, the $W_2$-optimal transport map is a diffeomorphism isotopic to the identity (Proposition~\ref{prop:isotopic}), and it is approximable with a flow induced by a linear-control system (Corollary~\ref{cor:ot_map_approx}). \\
In Section~\ref{sec:G-conv}, we establish the $\Gamma$-convergence result for the functionals $\F^{N,\beta}$ defined as in \eqref{eq:intro_def_funct_N} (Theorem~\ref{thm:G_conv}), working in a slightly more general setting than the remainder of the paper. In Theorem~\ref{thm:OT_map_appr} we focus our attention to the main problem of the paper, i.e., the recovery of the optimal transport map. Moreover, in Remark~\ref{rmk:triang_ineq_app_OT} we provide an asymptotic estimate for $N$ large of $W_2(\Phi_{\hat u\, \#}\mu, \nu)$ with $\hat u \in \arg \min \F^{N,\beta}$, and in Remark~\ref{rmk:geodesics} we discuss the possibility of approximating the $W_2$-geodesic connecting $\mu$ to $\nu$. \\
Finally, in Section~\ref{sec:num_approx}, we propose a numerical scheme for the approximate minimization of the functionals $\F^{N,\beta}$ based on the Pontryagin Maximum Principle.
In fact, this results in an algorithm for reconstructing the optimal transport map between $\mu,\nu$ by using an optimal coupling $\gamma_N$ between the empirical measures $\mu_N,\nu_N$.
We perform an experiment in $\R^2$ to validate the theoretical results.

\renewcommand{\theequation}{\thesection.\arabic{equation}}

\section{Preliminaries and Notations} \label{sec:prel}

\subsection{Preliminaries on Optimal Transport} Here, we collect some basic facts in Optimal Transport which will be useful for our purposes. We refer the reader to \cite{AG, Sant, Vi} for a complete introduction to the topic. 
For any $n\geq 1$ we denote by $\mathcal{P}(\R^n)$ the set of Borel probability measures on $\R^n$. 
We recall some definitions and basics facts about probability measures. 
\begin{defn}
Given  a Borel probability measure $\mu\in \mathcal{P}(\R^n)$ and a Borel map $T:\R^n\to \R^{n'}$ then the \emph{pushforward measure} of $\mu$ through the map $T$ is defined as the measure $T_{\sharp}\mu\in \mathcal{P}(\R^{n'})$ such that for any $A$ Borel set of $\R^{n'}$
\begin{equation*}
T_{\sharp}\mu(A):=\mu(T^{-1}(A)),
\end{equation*}
where $T^{-1}(A)$ is the preimage of $A$ through the map $T$. 
\end{defn}
The
pushforward measure can be characterized 
by means of the following identity:
\begin{equation}\label{eq:char_pushforward}
\int_{\R^{n'}}\varphi(x)\,d T_{\sharp}\mu(x)=\int_{\R^n} \varphi\circ T(x)\,d\mu(x)
\end{equation}
for every $\varphi\in C^{0}_{b}(\R^{n'}, \R)$.

We recall the notion of weak convergence
of probability measures.
\begin{defn} \label{def:weak_conv_prob}
For every $n\geq1$, we say that the sequence $(\eta_N)_{N\geq1}\subset \mathcal{P}(\R^n)$
is weakly convergent to $\eta_\infty\in \mathcal{P}(\R^n)$ if for every continuous bounded
function $\varphi\in C_b^0(\R^n,\R)$ the following identity holds:
\begin{equation*}
    \lim_{N\to\infty} \int_{\R^n} \varphi(x)d\eta_N(x) = \int_{\R^d} \varphi(x)d\eta_\infty(x),
\end{equation*}
and we write $\eta_N\weak^*\eta_\infty$ as $N\to\infty$.
\end{defn}

In the next result we recall that the 
pushforward trough continuous maps is 
stable with respect to the weak convergence.
\begin{lemma}\label{lem:stability_under_pushfw}
Let $(\mu_N)_{N\geq 1}$ be a sequence of probability measures of  $\R^n$ and $\mu_{\infty}\in\mathcal{P}(\R^n)$ such that $\mu_N\weak^*\mu_{\infty}$ as $N\to +\infty$. Let $T:\R^n\to\R^{n'}$ be a continuous map.  Then  $T_{\sharp}\mu_N\weak^*T_{\sharp}\mu_{\infty}$ as $N\to +\infty$.
\end{lemma}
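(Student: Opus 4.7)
The plan is to verify the defining condition for weak convergence of $T_\sharp\mu_N$ to $T_\sharp\mu_\infty$ directly, by reducing it via the pushforward characterization \eqref{eq:char_pushforward} to the weak convergence $\mu_N\weak^*\mu_\infty$ that we already have on $\R^n$.

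Concretely, I would fix an arbitrary test function $\varphi\in C_b^0(\R^{n'},\R)$ and apply \eqref{eq:char_pushforward} with $T$ and $\mu_N$ to rewrite
\[
\int_{\R^{n'}}\varphi(y)\,d(T_\sharp\mu_N)(y)=\int_{\R^n}\varphi\circ T(x)\,d\mu_N(x).
\]
The key observation is that the integrand on the right belongs to $C_b^0(\R^n,\R)$: continuity follows because $T$ is continuous by hypothesis and $\varphi$ is continuous, hence $\varphi\circ T$ is continuous; boundedness follows because $\|\varphi\circ T\|_\infty\le\|\varphi\|_\infty<\infty$. So $\varphi\circ T$ is an admissible test function for the weak convergence on $\R^n$.

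Then I would invoke Definition~\ref{def:weak_conv_prob} applied to the sequence $(\mu_N)_N$ with the test function $\varphi\circ T$, obtaining
\[
\lim_{N\to\infty}\int_{\R^n}\varphi\circ T(x)\,d\mu_N(x)=\int_{\R^n}\varphi\circ T(x)\,d\mu_\infty(x),
\]
and close the loop by applying \eqref{eq:char_pushforward} once more, this time to $\mu_\infty$, to identify the right-hand side with $\int_{\R^{n'}}\varphi(y)\,d(T_\sharp\mu_\infty)(y)$. Since $\varphi$ was arbitrary, this is exactly the weak convergence $T_\sharp\mu_N\weak^* T_\sharp\mu_\infty$.

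There is no real obstacle here; the content of the lemma is essentially the functoriality of pushforward combined with the fact that pre-composition with a continuous function preserves the class $C_b^0$. The only subtlety worth flagging in the write-up is that boundedness of $\varphi\circ T$ does \emph{not} require $T$ to be bounded, only $\varphi$ to be bounded, which is why continuity of $T$ (rather than, say, properness or boundedness) is sufficient.
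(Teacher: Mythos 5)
Your proof is correct and follows exactly the same route as the paper's (one-line) proof: rewrite both sides via the pushforward identity \eqref{eq:char_pushforward}, note that $\varphi\circ T\in C^0_b(\R^n,\R)$, and invoke Definition~\ref{def:weak_conv_prob}. You simply spell out the details the paper leaves implicit.
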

\begin{proof}
It descends immediately from \eqref{eq:char_pushforward}, Definition~\ref{def:weak_conv_prob}, and the fact that that $\varphi\circ T\in C^{0}_{b}(\R^{n'}, \R) $ if  $\varphi\in  C^{0}_{b}(\R^n, \R)$.
\end{proof}
We denote by $\mathcal{P}_2(\R^n)$ the set of Borel probability measures having finite second moment, namely
\begin{equation*}
\mathcal{P}_2(\R^n):=\left\{ \mu \in \mathcal{P}(\R^n)\colon\, \int_{\R^n}|{x}|^2\, d\mu(x)<+\infty
\right\}.
\end{equation*}
For any two probability measures $\mu,\nu\in \mathcal{P}(\R^n)$ we define the set of \emph{admissible transport plans} between $\mu$ and $\nu$ as 
\begin{equation*}
\mathrm{Adm}(\mu,\nu):=\lbrace \gamma \in \mathcal{P}(\R^n\times\R^n)\colon (P_1)_{\sharp}\gamma=\mu,\, (P_2)_{\sharp}\gamma=\nu \rbrace, 
\end{equation*}
where $P_1, P_2:\R^n \times \R^n\to\R^n$ are the canonical projections 
on the first and second component, respectively.
\begin{defn}
For any two probability measures $\mu,\nu\in \mathcal{P}_2(\R^n)$, the \emph{$2$-Wasserstein distance} between $\mu$ and $\nu$ is defined as follows:
\begin{equation}\label{def:W2}
W_2(\mu,\nu):=\left(
\inf  \left\{
\int_{\R^n\times\R^n}|x-y|^2
\,d\gamma(x,y)\colon
\gamma \in \mathrm{Adm}(\mu,\nu)
\right\}
\right)^{\frac12}
\end{equation}
\end{defn}
We denote by $\opt(\mu,\nu)$ the set of admissible plans which realize the infimum in \eqref{def:W2}:
\begin{equation} \label{eq:def_opt_plans} 
\opt(\mu,\nu):=\left\{ \gamma \in \text{Adm}(\mu,\nu)\colon \int_{\R^n\times\R^n}|x-y|^{2}
\,d\gamma(x,y)=W^2_2(\mu,\nu) \right\}.
\end{equation}
It follows from classical arguments that the set $\opt(\mu,\nu)$ is non empty (see e.g. \cite[Theorem 1.5]{AG}).
We say that a Borel map $T:\R^n\to \R^n$ is an \emph{optimal transport map} between $\mu,\, \nu\in\mathcal{P}_2(\R^n)$ if 
$\gamma_T:=(\Id,T)_{\sharp}\mu\in \opt(\mu,\nu)$. We emphasize that in this paper we shall use the term \emph{optimal transport map} only referring to the cost related to the Euclidean squared distance.

We remark that if $(\eta_N)_{N\geq1}$ is a sequence of  probability measures with supports contained on a compact set $K\subseteq \R^d$, then the sequence weakly converges to a probability measure $\eta_{\infty}$ in the sense of Definition \ref{def:weak_conv_prob} if and only if $\lim_{N\to +\infty}W_2(\eta_N,\eta_{\infty})=0$, i.e. it converges in the $2$-Wasserstein distance (see e.g. \cite[Theorem 5.10]{Sant}).
\begin{proposition} \label{prop:conv_opt_plans}
Let $(\mu_N)_{N\geq1}, (\nu_N)_{N\geq1}\subset \mathcal{P}(\R^n)$ and be two sequences of probability measures, 
and let $\mu_\infty, \nu_\infty\in\mathcal{P}(\R^n) $ be such 
that $\mu_N\weak^* \mu_\infty$ and $\nu_N\weak^* \nu_\infty$ as $N\to\infty$. 
Let  $(\gamma_N)_{N\geq1}\subset \mathcal{P}(\R^n\times\R^n)$ be a sequence of probability measures satisfying  $(\gamma_N)_{N\geq1}\in\opt(\mu_N,\nu_N)$ for every $N\geq1$.
Then the sequence $(\gamma_N)_{N\geq1}$ is
weakly pre-compact, and every 
limiting point belongs to
$\opt(\mu_\infty,\nu_\infty)$.
\end{proposition}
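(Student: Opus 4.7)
The plan is to combine three ingredients: (i) weak pre-compactness of $(\gamma_N)$ via tightness of the marginals, (ii) identification of the marginals of any weak limit through stability under pushforward, and (iii) optimality of the limit via lower semicontinuity of the cost together with a recovery-sequence construction.

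\emph{Tightness and pre-compactness.} Since $\mu_N \weak^* \mu_\infty$ and $\nu_N \weak^* \nu_\infty$, Prokhorov's theorem yields tightness of each marginal family: for every $\e > 0$ there exist compact sets $K_1, K_2 \subset \R^n$ with $\mu_N(\R^n \setminus K_1), \nu_N(\R^n \setminus K_2) < \e/2$ uniformly in $N$. Because $\gamma_N$ has $\mu_N$ and $\nu_N$ as marginals, the elementary estimate
$$\gamma_N\bigl((\R^n \times \R^n) \setminus (K_1 \times K_2)\bigr) \leq \mu_N(\R^n \setminus K_1) + \nu_N(\R^n \setminus K_2) < \e$$
shows that $(\gamma_N)$ is itself tight, hence weakly pre-compact by another application of Prokhorov.

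\emph{Marginal identification.} For any weakly convergent subsequence $\gamma_{N_k} \weak^* \gamma_\infty$, the canonical projections $P_1, P_2 : \R^n \times \R^n \to \R^n$ are continuous, so Lemma~\ref{lem:stability_under_pushfw} gives $(P_i)_\sharp \gamma_{N_k} \weak^* (P_i)_\sharp \gamma_\infty$ for $i = 1,2$. Combining with $(P_1)_\sharp \gamma_{N_k} = \mu_{N_k} \weak^* \mu_\infty$ and the analogous relation on the second factor, uniqueness of weak limits forces $\gamma_\infty \in \mathrm{Adm}(\mu_\infty, \nu_\infty)$.

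\emph{Optimality.} The cost $c(x,y) := |x-y|^2$ is continuous and nonnegative, so approximating by the bounded continuous functions $c \wedge M$ and letting $M \to \infty$ via monotone convergence shows that $\gamma \mapsto \int c \, d\gamma$ is lower semicontinuous with respect to weak convergence. Therefore
$$\int c \, d\gamma_\infty \leq \liminf_k \int c \, d\gamma_{N_k} = \liminf_k W_2^2(\mu_{N_k}, \nu_{N_k}).$$
To close the estimate, I would fix any $\tilde\gamma \in \opt(\mu_\infty, \nu_\infty)$ and build a recovery sequence $\tilde\gamma_{N_k} \in \mathrm{Adm}(\mu_{N_k}, \nu_{N_k})$ by gluing optimal plans in $\opt(\mu_{N_k}, \mu_\infty)$ and $\opt(\nu_\infty, \nu_{N_k})$ with $\tilde\gamma$ along their common marginals, and then projecting onto the first and fourth coordinates. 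The triangle inequality for $W_2$, together with the equivalence of weak convergence and $W_2$-convergence on fixed compact supports recalled in the remark preceding the statement, then yields $\limsup_k \int c \, d\tilde\gamma_{N_k} \leq W_2^2(\mu_\infty, \nu_\infty)$; optimality of each $\gamma_{N_k}$ gives $\int c \, d\gamma_{N_k} \leq \int c \, d\tilde\gamma_{N_k}$, and chaining the inequalities forces $\int c \, d\gamma_\infty \leq W_2^2(\mu_\infty, \nu_\infty)$, so $\gamma_\infty \in \opt(\mu_\infty, \nu_\infty)$.

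The main obstacle is the last step: lower semicontinuity alone only produces a one-sided bound, and the recovery construction must cope with the fact that $c$ is unbounded. In the compactly supported regime relevant to this paper the gluing argument runs smoothly; absent a support hypothesis one would instead have to truncate or invoke a uniform second-moment bound to control the tails uniformly in $N$.
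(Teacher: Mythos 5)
Your first two steps (tightness of $(\gamma_N)$ inherited from the marginals via Prokhorov, and identification of the limit's marginals through Lemma~\ref{lem:stability_under_pushfw} applied to the projections) are correct and complete. For the optimality step, note first that the paper does not prove this proposition itself but cites \cite[Proposition~2.5]{AG}, whose argument is genuinely different from yours: there one uses the fact that optimality of $\gamma_N$ for the quadratic cost is equivalent to cyclical monotonicity of $\supp(\gamma_N)$, that every point of $\supp(\gamma_\infty)$ is a limit of points of $\supp(\gamma_{N_k})$, and that cyclical monotonicity is a closed condition; hence $\supp(\gamma_\infty)$ is cyclically monotone and $\gamma_\infty$ is optimal. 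That route needs no upper bound on the transport costs and works under weak convergence alone (for measures in $\mathcal{P}_2$).

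Your route --- lower semicontinuity of $\gamma\mapsto\int|x-y|^2\,d\gamma$ plus a recovery sequence built by gluing --- does have a gap at the level of generality of the statement as written. To close the chain of inequalities you need $\limsup_k W_2^2(\mu_{N_k},\nu_{N_k})\leq W_2^2(\mu_\infty,\nu_\infty)$, and your gluing construction delivers this only if $W_2(\mu_{N_k},\mu_\infty)\to 0$ and $W_2(\nu_{N_k},\nu_\infty)\to 0$. Weak convergence alone does not imply $W_2$-convergence (mass can escape in the second-moment sense), and the proposition assumes nothing beyond weak convergence; so as stated, your argument does not go through without an additional hypothesis such as uniformly compact supports or convergence of second moments. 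You flag this honestly, and in the setting where the proposition is actually invoked in the paper (all measures supported in a fixed compact $K$, where weak convergence and $W_2$-convergence coincide) your proof is sound. But if you want the proposition in the stated generality, you should either carry out the truncation you allude to or switch to the cyclical-monotonicity argument of the cited reference, which sidesteps the issue entirely.
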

\begin{proof}
See \cite[Proposition~2.5]{AG}.
\end{proof}


\subsection{Preliminaries on linear-control systems} \label{subsec:lin-ctrl}
In this section, we present some classical results for linear-control system that will be useful in the rest of the paper. 
We consider controlled dynamical systems in $\R^n$ of the form
\begin{equation}\label{eq:lin_ctrl}
    \dot x(t) = F(x(t))u(t) = \sum_{i=1}^k F_i(x(t))
    u_i(t) \qquad \mbox{a.e. in } [0,1],
\end{equation}
where $F=(F_1,\ldots,F_k):\R^n\to\R^{n\times k}$
is a smooth matrix-valued application that defines the control system, and $u=(u_1,\ldots,u_k)\in L^2([0,1],\R^k)$ is the control.
We assume the controlled vector fields $F_1,\ldots,F_k$ to be Lipschitz-continuous, i.e., there exists a constant $L>0$ such that
\begin{equation} \label{eq:lipsch_fields}
    \sup_{i=1,\ldots,k} \, \sup_{x\neq y} \,
    \frac{|F_i(x)-F_i(y)|_2}{|x-y|_2} \leq L.
\end{equation}
From the previous condition, it follows that the vector fields $F_1,\ldots,F_k$ have sub-linear growth, i.e., there exists $C>0$ such that
\begin{equation}\label{eq:sub_growth}
    |F_i(x)|_2 \leq C(1+|x|_2)
\end{equation}
for every $x\in \R^n$ and for every $i=1,\ldots,k$.
We denote by $\U:=L^2([0,1],\R^k)$ the space of admissible controls, and we endow it with the usual Hilbert space structure induced by the scalar product defined as
\begin{equation}
    \langle u, v\rangle_{L^2} :=
    \int_0^1 \langle u(t),v(t) \rangle_{\R^k}\, dt
\end{equation}
for every $u,v\in\U$.
For every $u\in\U$ we consider the diffeomorphism $\Phi_u:\R^n\to\R^n$ defined as
\begin{equation} \label{eq:def_Phi_u}
    \Phi_u(x) := x_u(1)
\end{equation}
for every $x\in\R^n$, where the absolutely continuous curve $x_u:[0,1]\to\R^n$ solves the Cauchy problem
\begin{equation} \label{eq:Cau_ctrl}
    \begin{cases}
        \dot x_u(t) = F(x_u(t))u(t) &\mbox{a.e. in } 
        [0,1],\\
        x_u(0) = x.
    \end{cases}
\end{equation}
We recall that the existence and uniqueness of the solution of \eqref{eq:Cau_ctrl} is guaranteed by Carath\'eodory Theorem (see, e.g., \cite[Theorem~5.3]{H80}).
We observe that considering the time span equal to $[0,1]$ in \eqref{eq:Cau_ctrl} is not restrictive for our purposes. Indeed, using the fact that the dynamics is linear in the controls, given a general evolution horizon $[0,T]$ with $T>0$, we can always reduce to the case $[0,1]$ by rescaling the controls.
We now investigate the Lipschitz continuity of the flows generated by the linear-control system \eqref{eq:lin_ctrl}.

\begin{lemma}\label{lem:Lip_flow}
For every $u\in\U$, let $\Phi_u:\R^n\to\R^n$ be the 
flow defined as in \eqref{eq:def_Phi_u},
associated to the linear-control system 
\eqref{eq:lin_ctrl} and corresponding to
the admissible control $u$.
For every $\rho>0$ there exists a $L'>0$ 
such that
\begin{equation} \label{eq:Lip_flow}
    |\Phi_u(x^1)-\Phi_u(x^2)|_2 \leq L'
    |x^1-x^2|_2
\end{equation}
for every $x^1,x^2\in\R^n$ and for every
$u\in\U$ with $\|u\|_{L^2}\leq\rho$.
\end{lemma}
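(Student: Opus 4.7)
The plan is to apply Grönwall's inequality to the difference of two trajectories corresponding to the same control $u$ but different initial data. For $x^1,x^2\in\R^n$ and $u\in\U$, let $x^1_u,x^2_u:[0,1]\to\R^n$ denote the solutions of \eqref{eq:Cau_ctrl} starting from $x^1$ and $x^2$, respectively. Writing these solutions in integral form, one has
\begin{equation*}
    x^1_u(t)-x^2_u(t) = (x^1-x^2) + \int_0^t \bigl(F(x^1_u(s)) - F(x^2_u(s))\bigr)u(s)\,ds.
\end{equation*}

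The next step is to estimate the integrand pointwise in time. Using the Cauchy–Schwarz inequality in $\R^k$ together with the Lipschitz assumption \eqref{eq:lipsch_fields} on each component $F_i$, I would obtain
\begin{equation*}
    \bigl|\bigl(F(x^1_u(s))-F(x^2_u(s))\bigr)u(s)\bigr|_2 \leq \sum_{i=1}^k |F_i(x^1_u(s))-F_i(x^2_u(s))|_2\,|u_i(s)| \leq L\sqrt{k}\,|x^1_u(s)-x^2_u(s)|_2\,|u(s)|_2.
\end{equation*}
Inserting this into the integral identity and taking the Euclidean norm yields
\begin{equation*}
    |x^1_u(t)-x^2_u(t)|_2 \leq |x^1-x^2|_2 + L\sqrt{k}\int_0^t |u(s)|_2\,|x^1_u(s)-x^2_u(s)|_2\,ds.
\end{equation*}

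At this point the standard Grönwall inequality, applied with the locally integrable weight $s\mapsto L\sqrt{k}\,|u(s)|_2$, gives
\begin{equation*}
    |x^1_u(t)-x^2_u(t)|_2 \leq |x^1-x^2|_2\,\exp\!\left(L\sqrt{k}\int_0^t |u(s)|_2\,ds\right).
\end{equation*}
Evaluating at $t=1$ and controlling the $L^1$-norm of $|u|_2$ by its $L^2$-norm via Cauchy–Schwarz (the time interval being of unit length), I get $\int_0^1 |u(s)|_2\,ds \leq \|u\|_{L^2}\leq\rho$, so that \eqref{eq:Lip_flow} holds with the uniform constant $L':=\exp(L\sqrt{k}\,\rho)$.

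This argument is essentially routine; the only place where care is needed is in passing from the component-wise Lipschitz bound on the $F_i$'s to a bound on $F(x)u$, which forces the $\sqrt{k}$ factor but causes no difficulty thanks to the linearity of the dynamics in $u$. The uniformity in the control, which is the content of the lemma, comes for free from the $L^2$-ball constraint $\|u\|_{L^2}\leq\rho$ combined with the Cauchy–Schwarz step at the end.
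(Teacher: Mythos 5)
Your proof is correct and follows essentially the same route as the paper's: integral form of the difference of trajectories, the Lipschitz bound \eqref{eq:lipsch_fields}, Grönwall with the $L^1$ weight $L\sqrt{k}\,|u(\cdot)|_2$, and the $L^1$--$L^2$ comparison on the unit interval, arriving at the same constant $L'=e^{L\sqrt{k}\rho}$. The only cosmetic difference is that you apply Cauchy--Schwarz in $\R^k$ pointwise in time before Grönwall, whereas the paper keeps $\sum_i|u_i(s)|$ in the weight and invokes its inequality \eqref{eq:L1_L2} at the end; the two are interchangeable.
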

\begin{proof}
    See \cite[Lemma~2.3]{S_deep} or in Appendix~\ref{app:lin-ctrl}.
\end{proof}

We conclude this section by recalling a convergence result.

\begin{proposition}\label{prop:conv_flows}
Let us consider a sequence $(u_m)_{m\in\NN}\subset \U$
and $u_\infty\in\U$ such that $u_m\weak_{L^2} u_\infty$ as
$m\to\infty$. For every $m\in\NN\cup\{\infty \}$, 
let $\Phi_{u_m}:\R^n\to\R^n$ be the flow
generated by the control system \eqref{eq:lin_ctrl}
and corresponding to the admissible control
$u_m$.
Then, for every compact set $K\subset \R^n$, we have that
\begin{equation} \label{eq:conv_flows}
    \lim_{m\to\infty} \sup_{x\in K}
    |\Phi_{u_m}(x) - \Phi_{u_\infty}(x)  |_2 =0.
\end{equation}
\end{proposition}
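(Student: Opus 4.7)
The plan is to first reduce the uniform statement on compact sets to a pointwise statement through the equi-Lipschitz bound already proved, and then establish pointwise convergence of the flows by a compactness-plus-identification argument on the trajectories.

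First I would observe that weak convergence $u_m \weak u_\infty$ in $L^2$ gives a uniform bound $\rho := \sup_m \|u_m\|_{L^2} < \infty$ by the Banach–Steinhaus theorem. By Lemma~\ref{lem:Lip_flow}, this furnishes a single constant $L'>0$ such that all the maps $\{\Phi_{u_m}\}_{m\in\NN\cup\{\infty\}}$ are $L'$-Lipschitz on $\R^n$. Hence, once pointwise convergence is established, the convergence is automatically uniform on every compact $K\subset\R^n$: this is the standard argument that an equicontinuous pointwise-convergent family is uniformly convergent on compacta. So the real work is to show $\Phi_{u_m}(x)\to\Phi_{u_\infty}(x)$ for each fixed $x\in\R^n$.

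Fix $x\in\R^n$ and let $x_{u_m}:[0,1]\to\R^n$ solve \eqref{eq:Cau_ctrl}. Using the sub-linear growth \eqref{eq:sub_growth} together with Cauchy–Schwarz and Grönwall, I would show that the trajectories $\{x_{u_m}(\cdot)\}_m$ are uniformly bounded on $[0,1]$, so they take values in some compact set $K'\subset\R^n$. On $K'$ the vector fields $F_1,\dots,F_k$ are bounded, say by $M$, hence
\begin{equation*}
  |x_{u_m}(t)-x_{u_m}(s)| \le \int_s^t |F(x_{u_m}(\tau))|_2\, |u_m(\tau)|_2\, d\tau \le M\sqrt{k}\, \rho\, \sqrt{|t-s|},
\end{equation*}
which gives uniform equicontinuity on $[0,1]$. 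By Arzel\`a–Ascoli, any subsequence admits a further subsequence $x_{u_{m_j}}\to y$ uniformly on $[0,1]$, for some continuous $y:[0,1]\to\R^n$ with $y(0)=x$.

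The decisive step is to identify $y$ as the trajectory $x_{u_\infty}$. I would pass to the limit in the integral formulation
\begin{equation*}
  x_{u_{m_j}}(t) = x + \int_0^t F(x_{u_{m_j}}(s))\, u_{m_j}(s)\, ds.
\end{equation*}
The point is that $F$ is Lipschitz, so the uniform convergence $x_{u_{m_j}}\to y$ on $[0,1]$ yields $F(x_{u_{m_j}}(\cdot))\mathbf{1}_{[0,t]}\to F(y(\cdot))\mathbf{1}_{[0,t]}$ strongly in $L^2([0,1],\R^{n\times k})$. Combined with the weak $L^2$-convergence $u_{m_j}\weak u_\infty$, the standard weak-strong pairing gives
\begin{equation*}
  \int_0^t F(x_{u_{m_j}}(s))\, u_{m_j}(s)\, ds \longrightarrow \int_0^t F(y(s))\, u_\infty(s)\, ds
\end{equation*}
for every $t\in[0,1]$. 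Therefore $y$ satisfies the Cauchy problem \eqref{eq:Cau_ctrl} with control $u_\infty$, and by uniqueness $y=x_{u_\infty}$. Since every subsequence of $(x_{u_m})$ has a further subsequence converging to the same limit $x_{u_\infty}$, the whole sequence converges; evaluating at $t=1$ yields $\Phi_{u_m}(x)\to\Phi_{u_\infty}(x)$, and combining with the equi-Lipschitz property concludes the proof.

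The main obstacle is exactly the weak-to-strong passage in the integral equation: naively, one only has weak $L^2$-convergence of $u_m$, which is not enough to pass to the limit inside a nonlinear ODE in general. Here, however, the linear-control structure \eqref{eq:lin_ctrl} is crucial: the nonlinearity appears only in the coefficient $F(x_{u_m})$, for which equicontinuity upgrades weak convergence of the state to strong $L^2$-convergence of the coefficient, and this is precisely what is needed to multiply against a weakly convergent sequence of controls.
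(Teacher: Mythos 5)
Your proof is correct, and its overall architecture coincides with the paper's: reduce the uniform statement on $K$ to pointwise convergence via the equi-Lipschitz bound of Lemma~\ref{lem:Lip_flow} (the paper phrases this reduction as an Arzel\`a--Ascoli pre-compactness argument for the restricted flows $(\Phi_{u_m}|_K)_m$, together with Lemma~\ref{lem:bound_Phi} for equi-boundedness, but this is the same standard mechanism you invoke). The genuine difference is that the paper outsources the decisive step --- the uniform-in-time convergence of the trajectories $x_{u_m}\to x_{u_\infty}$ for fixed initial datum --- to an external reference (\cite[Lemma~7.1]{S\_grad}), whereas you prove it in full: uniform bounds and $\tfrac12$-H\"older equicontinuity in time via Cauchy--Schwarz, Arzel\`a--Ascoli on the trajectories, and identification of the limit by pairing the strong $L^2$-convergence of $F(x_{u_{m_j}}(\cdot))$ against the weak $L^2$-convergence of $u_{m_j}$ in the integral formulation, followed by uniqueness and the subsequence-of-subsequences argument. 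This weak--strong pairing is exactly the point where the linear-control structure enters (cf.\ Remark~\ref{rmk:lin-ctrl_weak_conv}), and your write-up makes that mechanism explicit and self-contained, which is a net gain in readability at the cost of some length; all the individual steps (Banach--Steinhaus for the uniform bound $\rho$, the pairing estimate, the Carath\'eodory uniqueness) are sound.
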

\begin{proof}
    See \cite[Proposition~2.4]{S_deep} or in Appendix~\ref{app:lin-ctrl}.
\end{proof}

\begin{remark}\label{rmk:lin-ctrl_weak_conv}
    In the previous proposition the fact that the system is linear in the control variables plays a crucial role. Indeed, in the case of a nonlinear-control system (or neural ODE)
    \begin{equation*}
        \dot x = G(x,u),
    \end{equation*}
    in general it is not true that \textit{weakly-convergent} controls result in flows converging uniformly over compact subsets. In this situation, the local convergence of the flows holds if the controls are \textit{strongly} convergent. However, equipping the space of admissible controls with the $L^2$-strong topology is not suitable for our $\Gamma$-convergence argument.     
\end{remark}


\section{Approximability of the optimal transport map} \label{sec:appr_OT_map}
In this section, we address the problem of approximating the optimal transport map using flows generated by a linear-control system \eqref{eq:lin_ctrl}, where the controlled vector fields $F_1,\ldots,F_k$ satisfy a proper technical condition.
We begin by reporting some results concerning the approximation capabilities of flows generated by this kind of systems. We refer the interested reader to \cite{AS1,AS2} for a detailed discussion in full-generality.

We recall the definition of Lie algebra generated by a system of vector fields. Given the vector fields $F_1,\ldots,F_k$, the linear space $\mathrm{Lie}(F_1,\ldots,F_k)$ is defined as
\begin{equation*} \label{eq:Lie_gen}
\mathrm{Lie}(F_1,\ldots,F_k):=\mathrm{span}
\{ [F_{i_s},[\ldots ,[F_{i_2},F_{i_1}],\ldots]]
: s\geq1, i_1,\ldots,i_s\in \{ 1,\ldots,k \}\},
\end{equation*}
where $[F,F']$ denotes the Lie bracket between the smooth vector fields $F,F'$ of $\R^n$.
In view of the main result, we need to consider the subset of the Lie algebra generated by $F_1,\ldots,F_k$ whose vector fields have bounded $C^1$-norm on compact sets of $\R^n$. Given a vector field $X:\R^n\to\R^n$ and a compact set $K\subset \R^n$, we define
\begin{equation*}    
\|X\|_{1,K}:= \sup_{x\in K}\left(|X(x)|_2 + 
\sum_{i=1}^n|D_{x_i}X(x)|_2 \right). 
\end{equation*}
Finally, we introduce
\begin{equation*}
\mathrm{Lie}^\delta_{1,K}(F_1,\ldots,F_k):=
\{
X\in \mathrm{Lie}(F_1,\ldots,F_k):
\|X\|_{1,K}\leq \delta
\}.
\end{equation*}
We now formulate the assumption required
for the approximability result.
\begin{assumption} \label{ass:str_Lie}
The system of vector fields $F_1,\ldots,F_k$ satisfies
the {\it Lie algebra strong approximating property},
i.e., there exists $m\geq1$ such that, for every
$C^m$-regular vector field $Y:\R^n\to\R^n$ and for every compact set $K\subset\R^n$, there exists $\delta>0$ such that
\begin{equation} \label{eq:Lie_alg_cond}
\inf \left\{ \sup_{x\in K}|X(x)-Y(x)|_2 \,\, \Big|
X\in \mathrm{Lie}_{1,K}^\delta (F_1,\ldots,F_k) \right
\}=0.
\end{equation}
\end{assumption}

The next result illustrates the powerful approximation capabilities of flows of linear-control systems whose fields fulfill Assumption~\ref{ass:str_Lie}.

\begin{theorem}\label{thm:univ_app_flows}
Let $\Psi:\R^n\to\R^n$ be a diffeomorphism isotopic to the identity. 
Let $F_1,\ldots,F_k$ be a system of vector fields satisfying Assumption~\ref{ass:str_Lie}. Then, for each compact set $K\subset\R^n$ and each $\e>0$ there exists an admissible control $u\in\U$ such that 
\begin{equation}\label{eq:Univ_app}
\sup_{x\in K}|\Psi(x)-\Phi_u(x)|_2\leq \e,
\end{equation}
where $\Phi_u$ is the flow corresponding to
the control $u$ defined in \eqref{eq:def_Phi_u}.
\end{theorem}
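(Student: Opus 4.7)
The plan is to reduce the statement to the Agrachev--Sarychev controllability results in the group of diffeomorphisms (cited as \cite{AS1,AS2}) via a multi-step approximation scheme: I will first approximate $\Psi$ by flows of non-autonomous vector fields, then discretize in time to obtain concatenations of autonomous flows, then use Assumption~\ref{ass:str_Lie} to replace those autonomous fields by elements of $\mathrm{Lie}(F_1,\ldots,F_k)$, and finally use linearity in $u$ together with ``rapid oscillation'' (chronological-calculus) arguments to realize each such flow with an admissible control of the linear-control system \eqref{eq:lin_ctrl}.

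In more detail: since $\Psi$ is isotopic to the identity, pick a smooth isotopy $(\Psi_t)_{t\in[0,1]}$ with $\Psi_0=\Id$ and $\Psi_1=\Psi$, and set $Y_t := (\partial_t\Psi_t)\circ \Psi_t^{-1}$, so that $\dot\Psi_t = Y_t\circ\Psi_t$. Fix the compact set $K\subset\R^n$ and $\e>0$. By compactness, there exists a larger compact set $\tilde K\supset K$ containing $\Psi_t(K)$ for every $t\in[0,1]$, with some safety margin. Partitioning $[0,1]$ into $M$ sub-intervals $[t_{j-1},t_j]$ and replacing $Y_t$ on each sub-interval by the constant-in-$t$ field $Y_j:=Y_{t_{j-1}}$ yields, by a standard Gronwall estimate (based on the Lipschitz continuity of $Y$ in $t$ and $x$ on $\tilde K$), a concatenation of autonomous flows $e^{(t_j-t_{j-1})Y_M}\circ\cdots\circ e^{(t_1-t_0)Y_1}$ that approximates $\Psi$ uniformly on $K$ to within $\e/3$, provided $M$ is large enough.

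Next, I invoke Assumption~\ref{ass:str_Lie}: for each $j$ and for suitable thresholds $\delta_j>0$ fixed by the Gronwall-type estimates, I can choose $X_j \in \mathrm{Lie}^{\delta_j}_{1,\tilde K}(F_1,\ldots,F_k)$ with $\sup_{x\in\tilde K}|X_j(x)-Y_j(x)|_2$ arbitrarily small. The uniform $C^1$-bound $\|X_j\|_{1,\tilde K}\leq \delta_j$, together with a further Gronwall argument (iterated over the $M$ sub-intervals and over a controlled enlargement $\tilde K\subset\tilde{\tilde K}$), ensures that the concatenated flow $e^{(t_M-t_{M-1})X_M}\circ \cdots\circ e^{(t_1-t_0)X_1}$ is within $\e/3$ of the previous one, uniformly on $K$.

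The final and most delicate step is the realization of each flow $e^{(t_j-t_{j-1})X_j}$ with $X_j\in \mathrm{Lie}(F_1,\ldots,F_k)$ as a flow $\Phi_{u_j}$ of the linear-control system \eqref{eq:lin_ctrl}. This is exactly where the linear dependence on $u$, the sublinear growth \eqref{eq:sub_growth}, and the classical chronological-calculus formulas of \cite{AS1,AS2} enter: a Lie monomial of depth $s$ in the $F_i$'s is recovered as the limit of flows generated by rapidly oscillating piecewise-constant controls of amplitude diverging like a positive power of the oscillation frequency, with the $C^1$-control on $X_j$ guaranteeing a uniform convergence rate on the relevant compact set. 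Concatenating the resulting controls $u_1,\ldots,u_M$ (after rescaling time back to $[0,1]$) produces the desired $u\in\U$, and summing the three error contributions yields \eqref{eq:Univ_app}. The main obstacle is precisely this last step; however, the whole point of the formulation of Assumption~\ref{ass:str_Lie} is that the quantitative control on $\|X\|_{1,\tilde K}$ makes the rapid-oscillation approximation uniform, so that the argument of \cite{AS1,AS2} can be invoked essentially as a black box. I also note that Lemma~\ref{lem:Lip_flow} and Proposition~\ref{prop:conv_flows} provide the basic continuity/stability of the map $u\mapsto\Phi_u$ used implicitly in propagating the errors through the concatenation.
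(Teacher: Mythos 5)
The paper does not actually reprove this statement: its ``proof'' is the one-line citation \cite[Theorem~5.1]{AS2}, and your outline is a faithful reconstruction of the strategy of that cited proof (time-discretization of the isotopy generated by $Y_t$, substitution of the resulting autonomous fields by elements of $\mathrm{Lie}(F_1,\ldots,F_k)$ with the uniform $C^1$-bounds supplied by Assumption~\ref{ass:str_Lie}, and realization of the bracket flows by the linear-control system via oscillating controls). Since you explicitly defer the genuinely hard step --- the uniform-on-compacts realization of flows of Lie-algebra elements by admissible controls of \eqref{eq:intro_lin_ctrl} --- to \cite{AS1,AS2} as a black box, your argument is correct at essentially the same level of rigor as the paper's own citation, and the error-splitting and compact-set bookkeeping you supply around it are sound.
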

\begin{proof}
See \cite[Theorem~5.1]{AS2}.
\end{proof}

\begin{remark}\label{rmk:controll_result}
    We recall that a diffeomorphism $\Psi:\R^n\to\R^n$ is \textit{isotopic to the identity} if it can be expressed as the final-time flow induced by a non-autonomous vector field which is smooth in the state-variable. In other words, if there exists a time-varying vector field $Y:[0,1]\times \R^n \to \R^n$ such that $Y(t,\cdot)\in C^\infty (\R^n,\R^n)$ for every $t\in [0,1]$, and such that for every $x_0\in \R^n$ we have
    \begin{equation} \label{eq:isotopic}
        \Psi(x_0) = x(1), \quad \mbox{where} \quad
        \begin{cases}
            \dot x(t) = Y(t,x(t)) & t\in [0,1],\\
            x(0)=x_0.
        \end{cases}
    \end{equation}
    We observe that, by definition, any diffeomorphism $\Phi_u$ with $u\in \U$ of the form \eqref{eq:def_Phi_u} is isotopic to the identity. 
    The remarkable fact conveyed by Theorem~\ref{thm:univ_app_flows} is that, when Assumption~\ref{ass:str_Lie} holds, the family $\mathscr F :=\{ \Phi_u: u\in \U \}$ is dense with respect to the $C^0_c$-topology in the class of the diffeomorphisms isotopic to the identity.
    In the jargon of the Machine Learning community, Theorem~\ref{thm:univ_app_flows} can be classified as a \textit{universal approximation result}.
\end{remark}

\begin{remark} \label{rmk:functional_diffeo}
    Given a compact set $K\subset \R^n$, a probability measure $\mu\in \mathcal{P}(K)$ and a diffeomorphism $\Psi:\R^n\to \R^n$ isotopic to the identity, we can consider the functional $\F^{\mu, \beta}:\U\to \R_+$ defined as follows:
    \begin{equation} \label{eq:def_funct_diffeo}
        \F^{\mu, \beta}(u) := 
        \int_K | \Phi_u(x)- \Psi(x) |_2^2 \, d\mu(x) 
        + \frac\beta2 \|u \|_{L^2}^2,
    \end{equation}
    where $\beta>0$ is a parameter tuning the Tikhonov regularization on the energy of the control.
    The problem concerning the minimization of \eqref{eq:def_funct_diffeo} has been studied in detail in \cite{S_deep}. In particular, in virtue of the controllability result expressed in Theorem~\ref{thm:univ_app_flows}, it is possible to show that, for every $\epsilon > 0$, there exists $\bar \beta>0$ such that, for every $\bar u \in \arg\min_{\U} \F^{\mu, \bar \beta}$, we have
    \begin{equation*}
        \int_K | \Phi_{\bar u}(x)- \Psi(x) |_2^2 \, d\mu(x) \leq \epsilon.
    \end{equation*}
    For the details, see \cite[Proposition~5.4]{S_deep}. 
    The fact that, when $\beta$ is small enough, the minimizers of $\F^{\mu, \beta}$ achieve an arbitrarily small mean squared approximantion error is of primary importance for practical purposes.
    Indeed, even though the proof of Theorem~\ref{thm:univ_app_flows} in \cite{AS2} provides an explicit procedure to obtain the approximating flow, it requires the knowledge of a non-autonomous vector field $Y:[0,1]\times \R^n \to \R^n$ related to the fact that $\Psi$ is isotopic to the identity (see \eqref{eq:isotopic}).
    In addition, the control constructed with the strategy illustrated in \cite{AS2} cannot be expected to be optimal in the $L^2$-norm, among all the other controls that achieve the same quality of approximation.
    For this reason, in \cite{S_deep} the computational approximation of $\Psi$ was performed via the numerical minimization of \eqref{eq:def_funct_diffeo}.
\end{remark}

\begin{remark}\label{rmk:hermite_fields}
    We exhibit here a system of vector fields in $\R^n$ for which Assumption~\ref{ass:str_Lie} holds. For every $n>1$ and $\nu>0$, consider the vector fields in $\R^n$ 
    \begin{equation}\label{eq:v_fields_control}
    {\bar F_i}(x) := \frac{\partial}{\partial x_i},
    \quad 
    {\bar F'_i}(x) := e^{-\frac{1}{2\zeta}|x|^2}\frac{\partial}{\partial x_i},
    \quad i=1,\ldots,n,
    \end{equation}
    with $\zeta>0$.
    Then the system $\bar F_1,\ldots,\bar F_n,\bar F'_1,\ldots,\bar F'_n$ satisfies Assumption~\ref{ass:str_Lie} (see \cite[Proposition~6.1]{AS2}). The key-observation is that, by taking the Lie brackets of \eqref{eq:v_fields_control}, it is possible to generate the Hermite monomials of every degree.
    Therefore, any linear-control system having at least \eqref{eq:v_fields_control} among the controlled fields can generate flows with the approximation capabilities described by Theorem~\ref{thm:univ_app_flows}.
    Moreover, adding extra controlled fields to the family \eqref{eq:v_fields_control} is not going to improve Theorem~\ref{thm:univ_app_flows}, since, as explained above in Remark~\ref{rmk:controll_result}, the density result stated there is the best that one can expect.
    Even though this argument is correct from a theoretical viewpoint, it is interesting to observe that, for practical purposes, enlarging the family of vector fields \eqref{eq:v_fields_control} can be very beneficial. For further details on this intriguing point, we recommend the discussion in \cite[Remark~3.15]{S_deep} and the numerical experiments in \cite[Section~8]{S_deep}.
\end{remark}

 We conclude this section by showing that, under suitable assumptions on the probability measures $\mu, \nu$, the optimal transport map between $\mu$ and $\nu$ is a diffeomorphism isotopic to the identity.
\begin{proposition} \label{prop:isotopic}
Let $\mu=\rho_\mu\mathcal{L}_{\R^n}$ and $\nu=\rho_\nu\mathcal{L}_{\R^n}$ be two probability measures,  with $\rho_\mu:\Omega_1\to \R$ and $\rho_\nu:\Omega_2\to \R$, where $\Omega_1$ and $\Omega_2$ are open and bounded substets of $\R^n$. Let us assume  that there exist a constant $C>1$ such that $ C\geq \rho_\mu\geq {1}/{C}$ on $\Omega_1$ and $ C\geq \rho_\nu\geq {1}/{C}$ on $\Omega_2$, and in addition that \begin{itemize}
\item  $\rho_\mu\in C^{\infty}(\bar\Omega_1,\R^d) $ and $\rho_\nu\in C^{\infty}(\bar\Omega_2,\R^n)$;
\item $\Omega_1,\, \Omega_2$ are smooth and uniformly convex. 
\end{itemize}
Let $T:\bar \Omega_1\to \bar \Omega_2$ be the optimal transport map between
$\mu$ and $\nu$. Then $T$ is the restriction of a diffeomorphism isotopic to the identity. 
\end{proposition}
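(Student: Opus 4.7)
\emph{Plan of proof.} The strategy combines Brenier's theorem with Caffarelli's boundary regularity and a displacement-interpolation argument to exhibit $T$ as the time-$1$ map of a smooth time-dependent vector field on $\R^n$, which by Remark~\ref{rmk:controll_result} is precisely what is needed.

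First, I would invoke Brenier's theorem to represent $T = \nabla\varphi$ for some convex Brenier potential $\varphi:\bar\Omega_1\to\R$, so that $\varphi$ solves the Monge--Amp\`ere equation $\det D^2\varphi \cdot (\rho_\nu\circ\nabla\varphi) = \rho_\mu$ with the second-boundary-value condition $\nabla\varphi(\Omega_1)=\Omega_2$. Under the stated assumptions (smooth uniformly convex domains, $C^\infty$ densities bounded above and below by $C$ and $1/C$), Caffarelli's regularity theory \cite{Ca1, Ca2} yields $\varphi\in C^\infty(\bar\Omega_1)$ with $D^2\varphi$ positive definite, so that $T:\bar\Omega_1\to\bar\Omega_2$ is a $C^\infty$-diffeomorphism. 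In particular, by compactness of $\bar\Omega_1$, there exists $c>0$ such that $D^2\varphi(x)\geq c\,\Id$ on $\bar\Omega_1$.

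Second, I would construct a smooth isotopy between $\Id$ and $T$ \emph{on} $\bar\Omega_1$ by setting
\[
\varphi_s(x) := \tfrac{1-s}{2}|x|^2 + s\,\varphi(x), \qquad T_s := \nabla\varphi_s = (1-s)\Id + sT, \qquad s\in[0,1].
\]
Since $D^2\varphi_s = (1-s)\Id + sD^2\varphi \geq \min(1-s+sc)\,\Id$ is uniformly positive definite on $[0,1]\times\bar\Omega_1$, the map $T_s$ is a local diffeomorphism; being the gradient of a strictly convex function, it is also globally injective. Hence $T_s:\bar\Omega_1\to\bar\Omega_1^s := T_s(\bar\Omega_1)$ is a $C^\infty$-diffeomorphism, with $T_0=\Id$, $T_1=T$, and $(s,x)\mapsto T_s(x)$ of class $C^\infty$ on $[0,1]\times\bar\Omega_1$.

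Third, I would promote this local isotopy to a global one. Passing to Eulerian coordinates on the space--time tube $\mathcal{O}:=\bigcup_{s\in[0,1]}\{s\}\times \bar\Omega_1^s$, define
\[
Y(s,y) := (T-\Id)\bigl(T_s^{-1}(y)\bigr),
\]
which is smooth on a neighborhood of $\mathcal{O}$ and satisfies $\partial_s T_s(x) = Y(s, T_s(x))$. Using a smooth cutoff supported in a tubular neighborhood of $\mathcal{O}$ (a standard Whitney-type extension, available because $\partial\Omega_1$ and $(s,x)\mapsto T_s(x)$ are smooth), I would extend $Y$ to a compactly supported $\tilde Y\in C^\infty([0,1]\times\R^n,\R^n)$. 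The time-$s$ flow $\hat T_s$ of $\tilde Y$ is then a diffeomorphism of $\R^n$ for every $s$, with $\hat T_0=\Id$; by uniqueness of solutions to the Cauchy problem, $\hat T_s|_{\bar\Omega_1}=T_s$, and in particular $\hat T_1$ is a diffeomorphism of $\R^n$ isotopic to the identity in the sense of \eqref{eq:isotopic} and restricting to $T$ on $\bar\Omega_1$.

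The only serious analytic content is Caffarelli's boundary regularity, which I would use as a black box to obtain the smooth diffeomorphism $T:\bar\Omega_1\to\bar\Omega_2$; the remainder is soft. The step requiring some care is the smooth extension of $Y$ from the space--time tube $\mathcal{O}$ to all of $[0,1]\times\R^n$ so that the Cauchy problem for $\tilde Y$ is well-posed, but this is routine once smoothness of $\mathcal{O}$ is in hand.
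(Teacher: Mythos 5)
Your proposal is correct and follows essentially the same route as the paper: Brenier's theorem plus Caffarelli's regularity to get $T=\nabla\varphi$ smooth with uniformly positive definite Hessian, the linear interpolation $T_s=(1-s)\Id+sT$ as a family of gradient-of-convex diffeomorphisms, the Eulerian field $Y(s,y)=(T-\Id)\bigl(T_s^{-1}(y)\bigr)$, and a compactly supported smooth extension whose flow realizes the isotopy. The only cosmetic difference is the order of extension: the paper first extends the potential $\varphi$ to an open neighborhood $O_1\supset\bar\Omega_1$ via Whitney's theorem, so that the interpolants and the velocity field are defined on open sets before the final extension, whereas you extend the velocity field directly from the closed space--time tube.
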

\begin{proof}
We proceed in four steps: in the first three we construct a smooth vector field, and in the last one we use this vector field to show that the optimal transport map $T: \bar \Omega_1 \to \bar \Omega_2$ is isotopic to the identity. 
We first make some preliminary observations. By Brenier Theorem (see  e.g.\ \cite[Theorem 1.26]{AG}), it follows that the optimal transport map satisfies $T=\nabla \varphi$, where $\varphi:\bar\Omega_1\to\R$ is a convex map. In addition, in virtue of  regularity results for the Monge-Amp\`ere equation (see \cite[Theorem 3.3]{DPHF}  and also \cite{Ca1,Ca2}), we know that $T=\nabla\varphi$ is a diffeomorphism of class  $C^{\infty}(\bar\Omega_1,\bar\Omega_2)$.
Hence we have that $\varphi$ is convex and of class $C^{\infty}$, and that $\nabla \varphi$ is a diffeomorphism onto its image. This implies that there exists $l>1$ such that for any $x\in \bar \Omega_1$ the eigenvalues of  the Hessian matrix of $\varphi$ at $x$, denoted by $\nabla^2\varphi(x)$,  are in the interval $(1/l, l)$. 
\\
\noindent
{\bf Step 1.}
We claim that there exist $O_1$ and $O_2$ bounded open sets with $\bar \Omega_1\subset O_1$ and $\bar \Omega_2\subset O_2$ and $\tilde T:O_1\to O_2$ such that $\tilde T_{|\bar \Omega_1}=T$ and $\tilde T$ is a diffeomorphism.  
To see that, let $\tilde \varphi: \R^{d}\to \R$ be a $C^{\infty}$ function satisfying $\tilde \varphi_{|\bar \Omega_1}=\varphi$ obtained by using Withney Extension Theorem (see \cite[Theorem~1]{W34}). 
Then, provided that  $O_1\supset \bar \Omega_{1}$ is chosen
small enough, for every $x\in O_1$ the eigenvalues of $\nabla^2\tilde \varphi (x)$ lie  in $(\frac{1}{l}, l)$.  
This implies that  $\nabla\tilde \varphi:O_1\to \R^d$ is a local diffeomorphism. 
Moreover, it is injective since it is the gradient of a strictly (actually strongly) convex function. 
Therefore, we conclude that $\nabla\tilde \varphi:O_1\to \nabla \tilde \varphi(O_1)=:O_2$ is a diffeomorphism, and 
we define $\tilde T:=\nabla \tilde \varphi$. \\
\noindent
{\bf Step 2.}
For every $t\in [0,1]$ let us introduce the map $\tilde T_t:O_1\to \R^d$ defined as
\begin{equation} \label{eq:interpolating_map}
    \tilde T_t:=(1-t)\Id+t \tilde T,
\end{equation}
where $\Id:\R^n\to \R^n$ is the identity function.
Then, we have that $\tilde T_t=\nabla \tilde \varphi_t$, where $\tilde \varphi_t:O_1\to\R$ is the strongly convex function satisfying $\tilde \varphi_t(x) :=  \frac{1-t}2|x|_2^2 + t\tilde \varphi(x)$ for every $t\in [0,1]$ and for every $x\in O_1$.
Using the same argument as before, we obtain that $\tilde T_t:O_1\to T_t(O_1) $ is a diffeomorphism onto its image. \\
\noindent
{\bf Step 3.}
Let us set the time-varying vector field $F$ as 
\begin{equation*}
F(t,y):=-\tilde T_t^{-1}(y)+\tilde T(\tilde T_t^{-1}(y)) \quad \mbox{for } (t,y)\in D,
\end{equation*}
where $D\subset [0,1]\times \R^n$ is the bounded set defined as
\begin{equation*}
D:=\{(t,y):t\in [0,1], y\in \tilde T_t(O_1)\}.
\end{equation*} 
Up to restricting $O_1$ if necessary, we have that $F\in C^{\infty}(\bar D, \R^n)$.  We finally take $\tilde F: [0,1]\times \R^n\to \R^n$, $C^{\infty}$ vector field  satisfying $\tilde F_{|\bar D}=F$ and with compact support. 
\\
{\bf Step 4.}
Let us denote with $\Psi:[0,1]\times \R^n\to \R^n$ the flow induced on $\R^n$ by the smooth and non-autonomous vector field $\tilde F$, i.e., 
\begin{equation}\label{eq:flow_proof_diffeot}
    \begin{cases}
        \frac{d}{dt} \Psi(t,x) = \tilde F(t, \Psi(t,x)) & t \in [0,1],\\
        \Psi(0,x) = x & x\in \R^n. 
    \end{cases}
\end{equation}
In order to conclude that the optimal transport map $T$ is isotopic to the identity, we need to show that, for every $x\in \bar \Omega_1$, we have $\Psi(1,x)=T(x)$.
To see that, we first observe that, from the definition \eqref{eq:interpolating_map}, it follows that $\tilde T_0 (x) = x$ for every $x\in \bar O_1$. Moreover, by differentiating in time \eqref{eq:interpolating_map}, we deduce that
\begin{equation*}
    \begin{split}
        \frac{d}{dt}\tilde T_t (x) &= -x + \tilde T(x)\\
        & =  - \tilde T_t^{-1}(\tilde T_t(x)) + \tilde T  (\tilde T_t^{-1} (\tilde T_t(x))) \\
        & = \tilde F(t, \tilde T_t(x) ).
    \end{split}
\end{equation*}
Therefore, combining the last computations with \eqref{eq:flow_proof_diffeot}, from the uniqueness of the solutions of ODEs we obtain that $\Psi(t,x) = \tilde T_t(x)$ for every $t\in[0,1]$ and for every $x\in O_1$. In particular, recalling that $\tilde T_1 (x) = \tilde T(x) = T(x) $ for every $x\in \bar \Omega_1$, we deduce that $T$ is isotopic to the identity.
\end{proof}

We report that the regularity hypothesis of Proposition~\ref{prop:isotopic} can be weakened by assuming that the densities are of class ${C}^{k}$ instead of ${C}^{\infty}$. In this case, the map $T$ is isotopic to the identity via a vector field of class $C^{k+1}$.

We state below the result concerning the approximation of the optimal transport map.

\begin{corollary} \label{cor:ot_map_approx}
    Under the same assumptions and notations as in Proposition~\ref{prop:isotopic}, let $T:\bar \Omega_1 \to \bar \Omega_2$ be the optimal transport map between $\mu$ and $\nu$. 
    Let $F_1,\ldots,F_k$ be a system of vector fields satisfying Assumption~\ref{ass:str_Lie}. 
    Then, for every $\e>0$, there exists an admissible control $u\in\U$ such that 
\begin{equation*}
\sup_{x\in \Omega_1}|T(x)-\Phi_u(x)|_2\leq \e,
\end{equation*}
where $\Phi_u$ is the flow corresponding to the control $u$ defined in \eqref{eq:def_Phi_u}.
\end{corollary}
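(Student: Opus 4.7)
The plan is to derive the corollary directly by chaining together Proposition~\ref{prop:isotopic} and Theorem~\ref{thm:univ_app_flows}; no new analytical machinery is needed.

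First, I would invoke Proposition~\ref{prop:isotopic} to produce a diffeomorphism $\Psi:\R^n\to\R^n$ isotopic to the identity such that $\Psi|_{\bar\Omega_1}=T$. Concretely, the proof of Proposition~\ref{prop:isotopic} constructs a smooth compactly-supported non-autonomous vector field $\tilde F:[0,1]\times\R^n\to\R^n$ whose time-$1$ flow $\Psi(1,\cdot)$ is globally defined on $\R^n$ and coincides with $T$ on $\bar\Omega_1$. Since $\tilde F$ has compact support, $\Psi(1,\cdot)$ acts as the identity outside a large ball, so it is a diffeomorphism of $\R^n$ in the sense of~\eqref{eq:isotopic}.

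Next, I would apply Theorem~\ref{thm:univ_app_flows} to the diffeomorphism $\Psi$ just constructed, using the compact set $K:=\bar\Omega_1\subset \R^n$ and the prescribed tolerance $\e>0$. This is legitimate because the system $F_1,\ldots,F_k$ satisfies Assumption~\ref{ass:str_Lie} by hypothesis. The theorem then yields an admissible control $u\in\U$ such that
\begin{equation*}
    \sup_{x\in \bar\Omega_1}|\Psi(x)-\Phi_u(x)|_2 \leq \e.
\end{equation*}

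Finally, using $\Psi(x)=T(x)$ for every $x\in\bar\Omega_1$, I would conclude
\begin{equation*}
    \sup_{x\in\Omega_1}|T(x)-\Phi_u(x)|_2 \leq \sup_{x\in \bar\Omega_1}|T(x)-\Phi_u(x)|_2 \leq \e,
\end{equation*}
which is exactly the claimed estimate. There is no real obstacle here: the substantive content, namely showing that $T$ admits an isotopy-to-identity extension to all of $\R^n$, has already been settled in Proposition~\ref{prop:isotopic}, and the universal approximation statement of Theorem~\ref{thm:univ_app_flows} does the rest. The only point worth double-checking is that the extension provided in the proof of Proposition~\ref{prop:isotopic} is indeed a global diffeomorphism of $\R^n$ (and not only of some neighbourhood of $\bar\Omega_1$), but this is guaranteed by the compact-support extension of $\tilde F$ performed in Step~3 of that proof.
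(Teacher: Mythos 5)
Your proposal is correct and follows exactly the paper's route: the paper also proves this corollary by combining Proposition~\ref{prop:isotopic} (which yields the isotopic-to-identity extension $\Psi$ of $T$) with Theorem~\ref{thm:univ_app_flows} applied on the compact set $\bar\Omega_1$. Your additional check that the compactly supported extension $\tilde F$ from Step~3 produces a globally defined diffeomorphism of $\R^n$ is a sensible clarification, but the argument is the same.
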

\begin{proof}
    The proof follows immediately from Theorem~\ref{thm:univ_app_flows} and Proposition~\ref{prop:isotopic}.
\end{proof}
Corollary~\ref{cor:ot_map_approx} ensures that we can approximate the optimal transport map as the flow of a linear-control system. In general, we report that the problem of characterizing the functions that can be represented as flows of \emph{neural ODEs} (linear or non-linear in the controls) is an active field of research. For recent developments, we recommend \cite{kuehn_23}.  

In the next section we will study a functional whose minimization is related to the construction of the flow approximating the optimal transport map. Our approach is suitable for practical implementation, since, with a $\Gamma$-convergence argument, we can deal with the situation where only discrete approximations of $\mu, \nu$ are available.


\section{Optimal control problems and $\Gamma$-convergence} \label{sec:G-conv}

In this section we introduce a class of optimal control problems whose solutions play a crucial role in the construction of the approximating normalizing flows, and we establish a $\Gamma$-convergence result.
Here we work in a slightly more general framework than what is actually needed in the remainder of the paper.
For this reason, this part is divided into three subsections. In the first two, we present the existence and the $\Gamma$-convergence results for a broader class of problems, while in the last subsection we specialize to the problem of approximating the optimal transport map. In virtue of the $\Gamma$-convergence of the cost functionals, we can formulate a procedure of practical interest for the numerical approximation of the optimal transport map. 

Let $a:\R^n\times \R^n\to \R_+$ be a $C^1$-regular non-negative function,
and let $\gamma \in \mathcal{P}(\R^n\times\R^n)$ be a probability measure
with compact support. Namely, we assume that there exists a compact set $K\subset \R^n$ such that
$\mathrm{supp}(\gamma)\subset K\times K$.
For every $\beta>0$ we define the functional $\F^{\gamma,\beta}:\U\to\R_+$ as follows:
\begin{equation}\label{eq:def_functional_general}
    \F^{\gamma,\beta}(u):= \int_{\R^n\times\R^n}a(\Phi_u(x),y)\,d\gamma(x,y) + \frac{\beta}{2}\|u\|_{L^2}^2,
\end{equation}
where, for every $u\in\U$, the diffeomorphism
$\Phi_u:\R^n\to\R^n$ is the flow introduced 
in \eqref{eq:def_Phi_u}.
\subsection{Existence of minimizers} \label{subsec:exist_min}
Before proceeding we prove an auxiliary Lemma.

\begin{lemma} \label{lem:conv_intern_cost}
Let $a:\R^n\times \R^n\to \R_+$ be a $C^1$-regular 
non-negative function, and let $(u_m)_{m\in \NN}
\subset \U$ be a $L^2$-weakly convergent sequence,
i.e., $u_m\weak_{L^2}u_\infty$ as $m\to\infty$.
Finally, for every $m\in \NN\cup \{ \infty \}$,
let $\Phi_{u_m}:\R^n\to\R^n$ be the diffeomorphism
defined in \eqref{eq:def_Phi_u} and corresponding to the 
admissible control $u_m$.
Then, for every compact set 
$K'\subset \R^n\times \R^n$, we have that
\begin{equation} \label{eq:conv_intern_cost}
    \lim_{m\to\infty}\,
    \sup_{(x,y)\in K'}
    |a(\Phi_{u_m}(x),y)-a(\Phi_{u_\infty}(x),y)|=0.
\end{equation}
\end{lemma}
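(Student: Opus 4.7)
The plan is to combine the uniform convergence of the flows on compact sets (Proposition~\ref{prop:conv_flows}) with the uniform continuity of $a$ on compact subsets of $\R^n\times\R^n$. The argument is essentially mechanical: the only point requiring a small additional observation is that, in order to use uniform continuity of $a$, we must first confine \emph{all} the points $(\Phi_{u_m}(x),y)$ (with $(x,y)\in K'$ and $m\in \NN\cup\{\infty\}$) inside one fixed compact set.

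First, since $u_m\weak_{L^2} u_\infty$, the Banach--Steinhaus theorem gives $\rho>0$ with $\|u_m\|_{L^2}\leq \rho$ for every $m\in \NN\cup\{\infty\}$. Letting $K_1:=P_1(K')\subset \R^n$, which is compact, I would apply Proposition~\ref{prop:conv_flows} to obtain
\begin{equation*}
    \lim_{m\to\infty}\sup_{x\in K_1}|\Phi_{u_m}(x)-\Phi_{u_\infty}(x)|_2 = 0.
\end{equation*}

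Second, I would show that $\bigcup_{m\in \NN\cup\{\infty\}}\Phi_{u_m}(K_1)$ is bounded. This follows either from Lemma~\ref{lem:Lip_flow} (since $K_1$ is bounded and the flows are uniformly Lipschitz on $K_1$ for controls with $L^2$-norm bounded by $\rho$, we also get a uniform bound on $|\Phi_{u_m}(x)|_2$ for $x\in K_1$), or directly by a Grönwall argument based on the sublinear growth estimate \eqref{eq:sub_growth} together with Cauchy--Schwarz applied to $\int_0^1 |u_m|\,dt\leq \|u_m\|_{L^2}\leq \rho$. Consequently, there is a compact ball $\bar B\subset \R^n$ containing every $\Phi_{u_m}(K_1)$, and the set
\begin{equation*}
    K'' := \bar B \times P_2(K')
\end{equation*}
is a compact subset of $\R^n\times\R^n$ that contains $(\Phi_{u_m}(x),y)$ for all $(x,y)\in K'$ and all $m$.

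Third, since $a\in C^1(\R^n\times\R^n,\R_+)$, it is uniformly continuous on $K''$. Given $\e>0$, pick $\delta>0$ such that $|a(z_1,y)-a(z_2,y)|<\e$ whenever $(z_1,y),(z_2,y)\in K''$ with $|z_1-z_2|_2<\delta$. By the uniform flow convergence established above, there exists $M$ such that for all $m\geq M$ and all $x\in K_1$ we have $|\Phi_{u_m}(x)-\Phi_{u_\infty}(x)|_2<\delta$, whence
\begin{equation*}
    \sup_{(x,y)\in K'}|a(\Phi_{u_m}(x),y)-a(\Phi_{u_\infty}(x),y)|<\e
\end{equation*}
for every $m\geq M$, which gives \eqref{eq:conv_intern_cost}. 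There is no genuine obstacle here; the only nontrivial point is ensuring the uniform boundedness of the images $\Phi_{u_m}(K_1)$ so that uniform continuity of $a$ can be invoked on a single compact set.
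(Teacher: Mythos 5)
Your proof is correct and follows essentially the same route as the paper: bound $\|u_m\|_{L^2}$ uniformly, confine all images $\Phi_{u_m}(K_1)$ in a fixed compact set (the paper's Lemma~\ref{lem:bound_Phi}, which is exactly your Gr\"onwall/sublinear-growth alternative — note that Lemma~\ref{lem:Lip_flow} alone does not bound $|\Phi_{u_m}(x_0)|$ at a base point, so the Gr\"onwall route is the one that actually closes this step), and then combine the regularity of $a$ on that compact set with Proposition~\ref{prop:conv_flows}. The only cosmetic difference is that the paper uses the Lipschitz continuity of $a$ on the compact set (available since $a\in C^1$) to get a quantitative bound, whereas you invoke uniform continuity, which works just as well.
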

\begin{proof}
Since $K'\subset \R^n\times \R^n$ is compact, there
exist $K_1,K_2 \subset \R^n$ compact such that
$K\subset K_1\times K_2$.
Since the sequence $(u_m)_{m\in\NN}$ is weakly
convergent, there exists $\rho>0$ such that 
$\|u_m\|_{L^2}\leq \rho$ for every $m\in\NN\cup\{\infty\}$.
Therefore, in virtue of Lemma~\ref{lem:bound_Phi},
there exists a compact $\tilde K_1\subset \R^n$
such that
\begin{equation*}
    \Phi_{u_m}(K_1)\subset \tilde K_1
\end{equation*}
for every $m\in \NN\cup\{ \infty \}$. Since 
$a:\R^n\times \R^n\to\R_+$ is $C^1$-regular, we deduce that
the restriction $a\mid_{\tilde K_1 \times K_2}$ is
Lipschitz continuous with constant $\tilde L>0$,
which yields 
\begin{equation*}
    \sup_{(x,y)\in K_1\times K_2}
    |a(\Phi_{u_m}(x),y)-a(\Phi_{u_\infty}(x),y)|
    \leq \sup_{x\in K_1}\tilde L
    |\Phi_{u_m}(x)-\Phi_{u_\infty}(x)|_2
\end{equation*}
for every $m\in \NN$.
Then, owing to Proposition~\ref{prop:conv_flows}, from the previous inequality we deduce that
\begin{equation*}
    \lim_{m\to\infty} 
    \sup_{(x,y)\in K_1\times K_2}
    |a(\Phi_{u_m}(x),y)-a(\Phi_{u_\infty}(x),y)|
    =0.
\end{equation*}
Recalling that $K'\subset K_1\times K_2$ by construction, we have that \eqref{eq:conv_intern_cost} holds.
\end{proof}

In the next result we show that the functional $\F^{\gamma,\beta}$ defined in \eqref{eq:def_functional_general} admits a minimizer. Similarly as done in \cite{S_deep,S_ens}, the proof is based on the direct method of the Calculus of Variations.

\begin{proposition} \label{prop:exist_min_fun_gen}
Let $a:\R^n\times \R^n\to \R_+$ be a $C^1$-regular non-negative function,
and let $\gamma \in \mathcal{P}(\R^n\times\R^n)$ be a 
probability measure such that $\mathrm{supp}(\gamma)
\subset K\times K$, where $K\subset \R^n$ is 
a compact set. For every $\beta>0$, let 
$\F^{\gamma,\beta}:\U\to\R_+$ be the functional defined
in \eqref{eq:def_functional_general}. Then, there
exists $\hat u^{\gamma,\beta}\in \U$ such that
\begin{equation*}
    \F^{\gamma,\beta}(\hat u^{\gamma,\beta}) =
    \inf_{u\in\U}\F^{\gamma,\beta}(u).
\end{equation*}
\end{proposition}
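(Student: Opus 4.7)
The plan is to apply the direct method of the Calculus of Variations, exploiting Lemma~\ref{lem:conv_intern_cost} to obtain the lower semicontinuity of the functional with respect to the $L^2$-weak topology.

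First, I would take a minimizing sequence $(u_m)_{m\in\NN}\subset \U$ for $\F^{\gamma,\beta}$. Since $a\geq 0$, we have $\F^{\gamma,\beta}(u_m) \geq \frac{\beta}{2}\|u_m\|_{L^2}^2$, and since the sequence of values is bounded (being convergent to the infimum, which is finite as $\F^{\gamma,\beta}(0)<+\infty$ because $a$ is continuous and $\supp(\gamma)$ is compact), it follows that $(u_m)_m$ is bounded in $L^2$. By the Banach--Alaoglu theorem applied to the Hilbert space $\U$, we can extract a subsequence (not relabeled) such that $u_m \weak_{L^2} \hat u^{\gamma,\beta}$ for some $\hat u^{\gamma,\beta}\in \U$.

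Next, I would argue that $\F^{\gamma,\beta}$ is sequentially lower semicontinuous with respect to the $L^2$-weak topology. The regularization term $u\mapsto \frac{\beta}{2}\|u\|_{L^2}^2$ is weakly lower semicontinuous, being a continuous convex functional on a Hilbert space. For the integral term, I would actually prove the stronger property that it is weakly \emph{continuous}: since $\supp(\gamma)\subset K\times K$ with $K$ compact, Lemma~\ref{lem:conv_intern_cost} applied with $K'=K\times K$ yields
\begin{equation*}
    \lim_{m\to\infty}\sup_{(x,y)\in K\times K}|a(\Phi_{u_m}(x),y)-a(\Phi_{\hat u^{\gamma,\beta}}(x),y)|=0,
\end{equation*}
and therefore, using that $\gamma$ is a probability measure concentrated on $K\times K$,
\begin{equation*}
    \lim_{m\to\infty}\int_{\R^n\times\R^n}a(\Phi_{u_m}(x),y)\,d\gamma(x,y) = \int_{\R^n\times\R^n}a(\Phi_{\hat u^{\gamma,\beta}}(x),y)\,d\gamma(x,y).
\end{equation*}

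Combining these two properties gives
\begin{equation*}
    \F^{\gamma,\beta}(\hat u^{\gamma,\beta}) \leq \liminf_{m\to\infty}\F^{\gamma,\beta}(u_m) = \inf_{u\in\U}\F^{\gamma,\beta}(u),
\end{equation*}
which shows that $\hat u^{\gamma,\beta}$ is a minimizer. The main subtlety to keep in mind is that the continuity of the integral term under weak convergence of controls is not automatic; it crucially relies on the linearity of the control system \eqref{eq:lin_ctrl} (as highlighted in Remark~\ref{rmk:lin-ctrl_weak_conv}) together with the compactness of $\supp(\gamma)$, which together allow us to upgrade weak convergence of controls to uniform convergence of flows on compact sets via Proposition~\ref{prop:conv_flows}. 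All other ingredients are standard direct-method arguments.
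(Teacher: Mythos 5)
Your proof is correct and follows essentially the same route as the paper: the direct method with coercivity supplied by the $\frac{\beta}{2}\|u\|_{L^2}^2$ term, weak continuity of the integral term obtained from Lemma~\ref{lem:conv_intern_cost} applied on $K\times K$, and weak lower semicontinuity of the $L^2$-norm. The only cosmetic difference is that you unfold the direct method explicitly (minimizing sequence plus Banach--Alaoglu) where the paper cites the abstract statement from Dal Maso.
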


\begin{proof}
Let us equip $\U$ with the weak topology of $L^2$.
In virtue of the direct method of Calculus of Variations
(see, e.g., \cite[Theorem~1.15]{D93}), it is sufficient to
prove that the functional $\F^{\gamma,\beta}$ is 
sequentially coercive and lower semi-continuous
with respect to the weak topology of $L^2$.
As regards the coercivity, we observe that
for every $u\in\U$ we have
\begin{equation*}
    \frac\beta2 \|u\|_{L^2}^2 \leq \F^{\gamma,\beta}(u), 
\end{equation*}
where we used the non-negativity of the function 
$a:\R^n\times\R^n\to\R_+$ associated to the 
integral cost in \eqref{eq:def_functional_general}.
The last inequality implies the inclusion
\begin{equation*}
    \{ u\in \U: \F^{\gamma,\beta}(u)\leq C \}
    \subset \left\{ u\in\U: \|u\|_{L^2}^2 \leq 
    2\frac{C}{\beta} \right\}
\end{equation*}
for every $C\geq 0$. This establishes the 
weak coercivity.
Let us consider a sequence of admissible controls 
$(u_m)_{m\in\NN}$ such that $u_m\weak_{L^2}u_\infty$ as
$m\to\infty$. We have to show that 
\begin{equation}\label{eq:lower_semic_fun}
    \F^{\gamma,\beta}(u_\infty)\leq \liminf_{m\to\infty}
    \F^{\gamma,\beta}(u_m).
\end{equation}
For every $m\in\NN\cup\{\infty\}$, let 
$\Phi_{u_m}:\R^n\to\R^n$ be the diffeomorphism
defined as in \eqref{eq:def_Phi_u} and corresponding to the
admissible control $u_m$.
Since the sequence $(u_m)_{m\in\NN}$ is weakly
convergent, there exists $\rho>0$ such that 
$\|u_m\|_{L^2}\leq \rho$ for every $m\in\NN\cup\{\infty\}$.
Therefore, we can apply Lemma~\ref{lem:conv_intern_cost}
to the compact set $K\times K\subset \R^n\times \R^n$ 
to deduce that
\begin{equation} \label{eq:cont_int_cost}
    \lim_{m\to\infty}
    \int_{\R^n\times \R^n}
    a(\Phi_{u_m}(x),y) \,d\gamma(x,y) =
    \int_{\R^n\times \R^n}
    a(\Phi_{u_\infty}(x),y) \,d\gamma(x,y),
\end{equation}
where we used 
the hypothesis $\mathrm{supp}(\gamma)\subset K\times K$.
In virtue of \eqref{eq:cont_int_cost}, we compute
\begin{align*}
    \liminf_{m\to\infty} \F^{\gamma,\beta}(u_m)&=
    \liminf_{m\to\infty}
    \left(
    \int_{\R^n\times \R^n}
    a(\Phi_{u_m}(x),y) \,d\gamma(x,y)
    + \frac\beta2 \|u_m\|_{L^2}^2
    \right)\\
    &=\int_{\R^n\times \R^n}
    a(\Phi_{u_\infty}(x),y) \,d\gamma(x,y)
    + \frac\beta2 \liminf_{m\to\infty}\|u_m\|^2_{L^2}.
\end{align*}
Recalling the lower semi-continuity of the $L^2$-norm with
respect to the weak convergence 
(see, e.g., \cite[Proposition~3.5]{B11}), 
the previous identity yields \eqref{eq:lower_semic_fun},
proving that $\F^{\gamma,\beta}$ is sequentially weakly
lower semi-continuous. This concludes the proof.
\end{proof}

\subsection{$\Gamma$-convergence result} \label{subsec:G_conv}
In Proposition~\ref{prop:exist_min_fun_gen} we have proved that the 
functional $\F^{\gamma,\beta}:\U\to\R_+$ \textcolor{red}{attains}
the minimum. We are now interested to study the 
stability of the problem of minimizing $\F^{\gamma,\beta}$ 
when the measure $\gamma\in \mathcal{P}(\R^n\times\R^n)$
is perturbed.

Let us consider a sequence $(\gamma_N)_{N\geq1}\subset
\mathcal{P}(\R^n\times \R^n)$ 
such that $\gamma_N\weak^* \gamma_\infty$ as $N\to\infty$
and such that there exists
a compact set $K\subset \R^n$ satisfying
$\mathrm{supp}(\gamma_N)\subset K\times K$
for every $N\geq 1$. We observe that from this assumptions
it follows that $\mathrm{supp}(\gamma_\infty)
\subset K\times K$ as well.
For every $N\in \NN\cup\{ \infty \}$
we define the functional
$\F^{N,\beta}:\U\to\R_+$ as follows:
\begin{equation}\label{eq:def_funct_N_Gamma}
    \F^{N,\beta}(u):=
    \int_{\R^n\times\R^n} a(\Phi_u(x),y) \,d\gamma_N(x,y)
    + \frac\beta2 \|u\|_{L^2}^2,
\end{equation}
where, for every $u\in\U$, $\Phi_u:\R^n\to\R^n$ is the 
flow defined as in \eqref{eq:def_Phi_u}.
The question that we are going to study is how the
minimizers of $\F^{\infty,\beta}$ relate to the
minimizers of $(\F^{N,\beta})_{N\geq 1}$.
We insist on the fact that the parameter $\beta>0$ 
is the same for all the functionals in consideration.
This fact is crucial to provide the following uniform bound
for the $L^2$-norm of the minimizers. 

\begin{lemma} \label{lem:unif_bound_norm_min}
Let $a:\R^n\times \R^n\to \R_+$ be a $C^1$-regular 
non-negative function,
and let $(\gamma_N)_{N\geq1}\subset
\mathcal{P}(\R^n\times \R^n)$  be a sequence of 
probability measures
such that $\gamma_N\weak^* \gamma_\infty$ as $N\to\infty$.
Let us further assume that there exists
a compact set $K\subset \R^n$ satisfying
$\mathrm{supp}(\gamma_N)\subset K\times K$
for every $N\in \NN\cup\{ \infty \}$. 
For every $N\in \NN\cup\{ \infty \}$, let 
$\F^{N,\beta}:\U\to\R_+$ be the functional
defined as in \eqref{eq:def_funct_N_Gamma}, and let 
$\hat u^{N,\beta}\in \U$ be any of its minimizers.
Then, there exists a constant $C>0$ such that
\begin{equation}\label{eq:uniform_bound_norm_min}
    \|\hat u^{N,\beta}\|_{L^2}^2 \leq  \frac{C}{\beta}.
\end{equation}
\end{lemma}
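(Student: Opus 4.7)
The plan is to bound the minimum value of $\F^{N,\beta}$ uniformly in $N$ by testing the functional against the trivial control $u\equiv 0$, and then exploit the $L^2$-regularization term together with the non-negativity of $a$ to isolate $\|\hat u^{N,\beta}\|_{L^2}^2$.

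First I would observe that when $u\equiv 0$, the dynamics \eqref{eq:Cau_ctrl} has zero right-hand side, so $\Phi_0=\mathrm{Id}$ on $\R^n$. Hence
\begin{equation*}
    \F^{N,\beta}(0) = \int_{\R^n\times\R^n} a(x,y)\,d\gamma_N(x,y).
\end{equation*}
Since $a$ is continuous on $\R^n\times\R^n$ and $\mathrm{supp}(\gamma_N)\subset K\times K$ for every $N\in\NN\cup\{\infty\}$ with $K$ a fixed compact set, we may set $M:=\max_{(x,y)\in K\times K} a(x,y)<+\infty$, obtaining the uniform bound $\F^{N,\beta}(0)\leq M$ for every $N$.

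Next, because $\hat u^{N,\beta}$ minimizes $\F^{N,\beta}$, we have $\F^{N,\beta}(\hat u^{N,\beta})\leq \F^{N,\beta}(0)\leq M$. Using the non-negativity of $a$, the integral term of $\F^{N,\beta}(\hat u^{N,\beta})$ is non-negative, so
\begin{equation*}
    \frac{\beta}{2}\|\hat u^{N,\beta}\|_{L^2}^2 \leq \F^{N,\beta}(\hat u^{N,\beta}) \leq M,
\end{equation*}
from which \eqref{eq:uniform_bound_norm_min} follows with $C=2M$. The argument is entirely elementary, so there is no real obstacle; the only point worth stressing is that the constant $M$ is independent of $N$ precisely because all the supports $\mathrm{supp}(\gamma_N)$ sit inside the same fixed compact set $K\times K$, which is exactly the uniform-support hypothesis in the statement.
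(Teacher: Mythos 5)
Your proof is correct and follows essentially the same route as the paper: testing the functional against the zero control (whose flow is the identity), bounding the integral term by $\sup_{K\times K}a$ using the uniform compact-support hypothesis, and then dropping the non-negative integral term in $\F^{N,\beta}(\hat u^{N,\beta})$ to isolate the regularization term. No gaps.
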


\begin{proof}
Let us consider the admissible control $\bar u \equiv 0$.
Then, observing that 
$\Phi_{\bar u}\equiv \mathrm{Id}_{\R^n}$,
we have that 
\begin{equation}\label{eq:unif_norm_1}
    \F^{N,\beta}(\bar u) = \int_{\R^n\times\R^n}
    a(x,y)\,d\gamma_N(x,y) \leq 
    \sup_{(x,y)\in K\times K} a(x,y)
\end{equation}
for every $N\in \NN\cup\{ \infty\}$.
On the other hand, if $\hat u^{N,\beta}\in \U$ is a 
minimizer of $\F^{N,\beta}$, we obtain
\begin{equation}\label{eq:unif_norm_2}
    \F^{N,\beta}(\bar u) \geq 
    \F^{N,\beta}(\hat u^{N,\beta}) \geq \frac\beta2
    \|\hat u^{N,\beta}\|_{L^2},
\end{equation}
where we used the non-negativity of the function $a$.
Finally, combining \eqref{eq:unif_norm_1} and 
\eqref{eq:unif_norm_2}, we deduce that
\eqref{eq:uniform_bound_norm_min} holds.
\end{proof}

We are now in position to establish a $\Gamma$-convergence
result for the sequence of functionals 
$(\F^{N,\beta})_{N\geq1}$. 
We recall below the definition of $\Gamma$-convergence. 
For a thorough discussion on this topic, we refer the 
reader to the textbook \cite{D93}.

\begin{defn}
\label{def:G_conv}
Let $(\X,d)$ be a metric space, and for every
$N\geq 1$ let $\G^N:\X\to\R\cup \{+\infty \}$
be a functional defined over $\X$. 
The sequence $(\G^N)_{N\geq 1}$ is said to 
$\Gamma$-converge to a functional 
$\G^\infty:\X\to\R\cup \{+\infty \}$ if the following
conditions are satisfied:
\begin{itemize}
\item \emph{liminf condition}: for every sequence
$(u_N)_{N\geq 1}\subset \X$ such that $u_N\to_\X u$
as $N\to\infty$ the following inequality holds
\begin{equation}\label{eq:liminf_cond}
\G^\infty(u) \leq \liminf_{N\to\infty} \G^N(u_N);
\end{equation}
\item \emph{limsup condition}: 
for every $u\in \X$ there exists a sequence 
$(u_N)_{N\geq 1}\subset \X$ such that 
$u_N\to_\X u$ as $N\to\infty$ and such that
the following inequality holds:
\begin{equation}\label{eq:limsup_cond}
\G^\infty(u) \geq \limsup_{N\to\infty} \G^N(u_N).
\end{equation}
\end{itemize}
If the conditions listed above are satisfied, then
we write $\G^N \to_\Gamma \G^\infty$ as $N\to\infty$.
\end{defn}

In calculus of variations $\Gamma$-convergence results are 
useful to relate the asymptotic behavior of the minimizers
of the converging functionals to the mimizers of the
$\Gamma$-limit. Indeed, if the elements 
of the $\Gamma$-convergent sequence $(\G^N)_{N\geq 1}$
are equi-coercive in the $(\X,d)$ topology, then  
if $\hat u_N\in \arg\min_\X \G^N$ for every $N\geq1$,
the sequence $(\hat u_N)_{N\geq1}$ is pre-compact
in $(\X,d)$ and any of its limiting point 
is a minimizer of $\G^\infty$ (see, e.g., 
\cite[Corollary~7.20]{D93}).

As done in the proof of
Proposition~\ref{prop:exist_min_fun_gen}, it is convenient
to equip the space of admissible controls $\U$ with the
weak topology of $L^2$. However, the weak topology
is metrizable only on bounded subsets of $\U$
(see \cite[Remark~3.3 and Theorem~3.29]{B11}).
Nevertheless, Lemma~\ref{lem:unif_bound_norm_min}
guarantees that the minimizers of $\F^{N,\beta}$ are
included in $\U_\beta$ for every $N\in\NN\cup\{ \infty \}$,
where we set
\begin{equation} \label{eq:def_U_beta}
    \U_\beta:= 
    \left\{ u\in \U: \|u\|_{L^2}^2\leq {C}/{\beta} 
    \right\},
\end{equation}
and $C$ is the constant prescribed by 
\eqref{eq:uniform_bound_norm_min}.
In other words, for every $N\in \NN\cup\{ \infty \}$
we can consider the restrictions
$\F^{N,\beta}|_{\U_\beta}:\U_\beta\to\R_+$
without losing any information
on the minimizers. With a slight abuse of notations,
we continue to use the symbol $\F^{N,\beta}$ to denote the
restricted functionals.
We are now in position to prove the main result of the 
present section.

\begin{theorem} \label{thm:G_conv}
Let $a:\R^n\times \R^n\to \R_+$ be a $C^1$-regular 
non-negative function,
and let $(\gamma_N)_{N\geq1}\subset
\mathcal{P}(\R^n\times \R^n)$  be a sequence of 
probability measures
such that $\gamma_N\weak^* \gamma_\infty$ as $N\to\infty$.
Let us further assume that there exists
a compact set $K\subset \R^n$ satisfying
$\mathrm{supp}(\gamma_N)\subset K\times K$
for every $N\in \NN\cup\{ \infty \}$.
For every $N\in \NN\cup\{ \infty \}$, let
$\F^{N,\beta}:\U_\beta\to\R_+$ be 
the functional defined as in \eqref{eq:def_funct_N_Gamma}
and restricted to the bounded subset $\U_\beta\subset\U$
introduced in \eqref{eq:def_U_beta}.
Then, if we equip $\U_\beta$ with the 
weak topology of $L^2$, 
we have that $\F^{N,\beta}\to_\Gamma
\F^{\infty,\beta}$ as $N\to\infty$.
\end{theorem}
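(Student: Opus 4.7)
The plan is to verify the two conditions of Definition~\ref{def:G_conv} by exploiting (i) the local-uniform convergence of flows $\Phi_{u_N} \to \Phi_{u_\infty}$ granted by Proposition~\ref{prop:conv_flows} whenever $u_N \weak_{L^2} u_\infty$, and (ii) the uniform confinement of $\mathrm{supp}(\gamma_N)$ inside $K \times K$, which lets us cut the integrand off and use the weak-$*$ convergence of $\gamma_N$ against $C_b^0$ test functions.

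For the limsup condition, the natural recovery sequence is the \emph{constant} one $u_N \equiv u$, which trivially converges weakly to $u$ and has constant regularization term $\tfrac{\beta}{2}\|u\|_{L^2}^2$. The task reduces to proving
\begin{equation*}
    \int_{\R^n \times \R^n} a(\Phi_u(x),y)\,d\gamma_N(x,y) \longrightarrow \int_{\R^n \times \R^n} a(\Phi_u(x),y)\,d\gamma_\infty(x,y).
\end{equation*}
Since $(x,y) \mapsto a(\Phi_u(x),y)$ is continuous on $\R^n \times \R^n$ (as $a$ is $C^1$ and $\Phi_u$ is continuous), I multiply it by a cut-off $\chi \in C_c^0(\R^n\times\R^n)$ with $\chi \equiv 1$ on $K\times K$ to obtain a $C_b^0$-function. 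Because $\mathrm{supp}(\gamma_N) \subset K\times K$ for every $N\in \NN \cup\{\infty\}$, the multiplication by $\chi$ does not change the integrals, and the desired convergence follows from Definition~\ref{def:weak_conv_prob}.

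For the liminf condition, I consider an arbitrary weakly convergent sequence $u_N \weak_{L^2} u_\infty$ in $\U_\beta$ and I show the stronger statement that the integral term actually \emph{converges} to its limit value. Using the triangle inequality I split
\begin{align*}
    &\left|\int a(\Phi_{u_N}(x),y)\,d\gamma_N - \int a(\Phi_{u_\infty}(x),y)\,d\gamma_\infty\right| \\
    &\quad \leq \int \bigl|a(\Phi_{u_N}(x),y) - a(\Phi_{u_\infty}(x),y)\bigr|\,d\gamma_N + \left|\int a(\Phi_{u_\infty}(x),y)\,(d\gamma_N - d\gamma_\infty)\right|.
\end{align*}
The first term vanishes by Lemma~\ref{lem:conv_intern_cost} applied to $K' = K\times K$, since $(u_N)_{N\geq 1} \subset \U_\beta$ has uniformly bounded $L^2$-norm and $\gamma_N$ is a probability measure concentrated on $K\times K$. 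The second term vanishes by the same cut-off argument used in the limsup step. Combining this with the lower semi-continuity of $\|\cdot\|_{L^2}^2$ with respect to weak convergence (\cite[Proposition~3.5]{B11}) yields
\begin{equation*}
    \F^{\infty,\beta}(u_\infty) \leq \liminf_{N\to\infty} \F^{N,\beta}(u_N),
\end{equation*}
which is \eqref{eq:liminf_cond}.

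The main (but still routine) obstacle is the joint passage to the limit in the liminf condition, where two different modes of convergence interact: the weak $L^2$-convergence of controls (which produces uniform convergence of flows on compacts via Proposition~\ref{prop:conv_flows}) and the weak-$*$ convergence of probability measures. The splitting above is specifically designed to decouple them, handling each by a dedicated tool. Once this decoupling is in place, everything else reduces to standard applications of earlier results in the paper.
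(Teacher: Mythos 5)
Your proof is correct and follows essentially the same route as the paper's: the same constant recovery sequence for the limsup condition, and the same decomposition of the integral term in the liminf condition into a piece controlled by Lemma~\ref{lem:conv_intern_cost} and a piece controlled by the weak-$*$ convergence of $\gamma_N$, followed by the weak lower semi-continuity of the $L^2$-norm. The only (welcome) addition is your explicit cut-off argument making $a(\Phi_u(\cdot),\cdot)$ into a $C^0_b$ test function, a point the paper leaves implicit.
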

\begin{proof}
We start by proving the \textit{liminf condition}.
Let $(u_N)_{N\geq1}\subset \U_\beta$ be a sequence such that
$u_N\weak_{L^2}u$ as $N\to\infty$. We have to prove that
\begin{equation}\label{eq:liminf_G_proof}
    \F^{\infty,\beta}(u) \leq \liminf_{N\to\infty}
    \F^{N,\beta}(u_N).
\end{equation}
Recalling that $\mathrm{supp}(\gamma_N)\subset K\times K
\subset \R^n\times \R^n$ for every
$N\in\NN\cup\{  \infty\}$, we  observe that 
\begin{align*}
    \int_{K\times K}a(\Phi_{u_N}(x),y)\,d\gamma_N(x,y)
    &= 
    \int_{K\times K}\big[a(\Phi_{u_N}(x),y) 
    - a(\Phi_u(x),y)
    \big]\,d\gamma_N(x,y)\\
    & \quad +  \int_{K\times K} 
     a(\Phi_u(x),y)\,d\gamma_N(x,y).
\end{align*}
In virtue of Lemma~\ref{lem:conv_intern_cost},
from the weak convergence $u_N\weak_{L^2}u$ 
as $N\to\infty$
we deduce that 
\begin{equation*}
    \lim_{N\to\infty} 
    \int_{K\times K}\big[a(\Phi_{u_N}(x),y) 
    - a(\Phi_u(x),y)
    \big]\,d\gamma_N(x,y) = 0.
\end{equation*}
Moreover, since by hypothesis $\gamma_N\weak^*\gamma_\infty$
as $N\to\infty$, we obtain that 
\begin{equation} \label{eq:conv_int_cost_Gamma}
    \lim_{N\to\infty}\int_{K\times K} 
    a(\Phi_{u_N}(x),y)\,d\gamma_N(x,y)
    = 
   \int_{K\times K} 
    a(\Phi_{u}(x),y)\,d\gamma_\infty(x,y).
\end{equation}
Finally, recalling that $u_N\weak_{L^2}u$ as $N\to\infty$
implies
\begin{equation*}
    \|u\|_{L^2} \leq \liminf_{N\to\infty}\|u_N\|_{L^2},
\end{equation*}
from \eqref{eq:conv_int_cost_Gamma} it follows that
\eqref{eq:liminf_G_proof} holds.\\
We now prove the \textit{limsup condition}. For every
$u\in \U_\beta$, let us set $u_N= u$ for every
$N\in \NN$. Then, using again the fact that
$\gamma_N\weak^*\gamma_\infty$ as $N\to\infty$, we have
\begin{equation*}
    \lim_{N\to\infty} \F^{N,\beta}(u) = 
    \lim_{N\to\infty} \int_{K\times K} a(\Phi_u(x),y)
    \, d\gamma_N(x,y) + \frac\beta2 \|u\|_{L^2}^2
    = \F^{\infty,\beta}(u).
\end{equation*}
This concludes the proof.
\end{proof}

As anticipated above, we can use the previous
$\Gamma$-convergence result to study the 
asymptotics of the minimizers of the functionals
$(\F^{N,\beta})_{N\geq1}$. 

\begin{corollary} \label{cor:conv_min}
Under the same assumptions as in Theorem~\ref{thm:G_conv},
we have that 
\begin{equation}\label{eq:conv_min}
    \lim_{N\to\infty}\min_{\U}\F^{N,\beta} =
    \min_{\U}\F^{\infty,\beta}.
\end{equation}
Moreover, if $\hat u_N\in \arg \min_\U \F^{N,\beta}$
for every $N\geq1$, then the sequence $(\hat u_N)_{N\geq1}$
is pre-compact with respect to the \emph{strong topology} of
$L^2$, and the limiting points are minimizers of 
the $\Gamma$-limit $\F^{\infty,\beta}$.
\end{corollary}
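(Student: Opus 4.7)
The plan is to apply the fundamental theorem of $\Gamma$-convergence to the result of Theorem~\ref{thm:G_conv}, and then to upgrade weak $L^2$-convergence of minimizers to strong $L^2$-convergence by exploiting the Hilbert-space structure together with the weak continuity of the integral term in $\F^{N,\beta}$.

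First, I would observe that Lemma~\ref{lem:unif_bound_norm_min} yields the uniform bound $\|\hat u_N\|_{L^2}^2\leq C/\beta$, so the whole sequence $(\hat u_N)_{N\geq 1}$ sits inside the weakly pre-compact set $\U_\beta$ defined in \eqref{eq:def_U_beta}. Combined with Theorem~\ref{thm:G_conv} and the equi-coercivity just noted, the classical result \cite[Corollary~7.20]{D93} delivers the convergence of minima \eqref{eq:conv_min} and the weak $L^2$-pre-compactness of $(\hat u_N)_{N\geq 1}$, with every weak cluster point being a minimizer of $\F^{\infty,\beta}$.

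The core task is then to promote weak convergence to strong convergence. Let $\hat u_{N_k}\weak_{L^2}\hat u_\infty$ be any weakly convergent subsequence, so that by the previous step $\hat u_\infty\in \arg\min_\U \F^{\infty,\beta}$. Repeating the continuity argument used for the liminf condition in the proof of Theorem~\ref{thm:G_conv}, that is, splitting the integral term and invoking Lemma~\ref{lem:conv_intern_cost} together with the weak convergence $\gamma_{N_k}\weak^*\gamma_\infty$ applied to the bounded continuous map $(x,y)\mapsto a(\Phi_{\hat u_\infty}(x),y)$, I would obtain
\begin{equation*}
\lim_{k\to\infty}\int_{K\times K}a(\Phi_{\hat u_{N_k}}(x),y)\,d\gamma_{N_k}(x,y)=\int_{K\times K}a(\Phi_{\hat u_\infty}(x),y)\,d\gamma_\infty(x,y).
\end{equation*}
Combining this identity with \eqref{eq:conv_min} and the minimality of $\hat u_\infty$ for $\F^{\infty,\beta}$ forces the quadratic terms to match in the limit, that is, $\|\hat u_{N_k}\|_{L^2}^2\to \|\hat u_\infty\|_{L^2}^2$. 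Since $L^2$ is a Hilbert space, weak convergence together with convergence of norms yields strong convergence; hence $\hat u_{N_k}\to \hat u_\infty$ strongly in $L^2$.

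Strong pre-compactness of the entire sequence $(\hat u_N)_{N\geq 1}$ then follows by the standard subsequence-extraction argument: any subsequence admits a weakly convergent sub-subsequence by the uniform $L^2$-bound, which by the reasoning above converges strongly to a minimizer of $\F^{\infty,\beta}$. I expect the only nontrivial step to be this weak-to-strong upgrade, which crucially relies on the linearity of the control system and the consequent weak continuity of the integral cost captured by Lemma~\ref{lem:conv_intern_cost}; without such structure, as Remark~\ref{rmk:lin-ctrl_weak_conv} warns, one could only hope for weak convergence of the minimizers.
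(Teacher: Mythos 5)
Your proposal is correct and follows essentially the same route as the paper: the fundamental theorem of $\Gamma$-convergence via \cite[Corollary~7.20]{D93} for the convergence of minima and weak pre-compactness, then the weak-to-strong upgrade by combining the weak continuity of the integral term (the argument of \eqref{eq:conv_int_cost_Gamma}) with the convergence of the minimal values to deduce $\|\hat u_{N_k}\|_{L^2}\to\|\hat u_\infty\|_{L^2}$ and concluding by the Radon--Riesz property of the Hilbert space $L^2$. No gaps to report.
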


\begin{remark}
We insist on the fact that Corollary~\ref{cor:conv_min}
ensures that the sequence 
$(\hat u_N)_{N\geq1}$ is pre-compact with respect to the
\emph{strong topology} of $L^2$. Indeed, in general,
given a $\Gamma$-convergent sequence of equi-coercive
functionals, the standard theory guarantees that
any sequence of minimizers is pre-compact with respect
to the same topology used to establish the 
$\Gamma$-convergence (see \cite[Corollary~7.20]{D93}). 
Thus, in our case, this fact would immediately imply that
$(\hat u_N)_{N\geq1}$ is pre-compact with respect to
the \emph{weak} topology of $L^2$. However, 
in the case of the functionals considered here, we 
can strenghten this fact and we can deduce the 
pre-compactness also in the strong topology.
We report that similar phenomena have been described 
in \cite{S_deep,S_ens}.
\end{remark}

\begin{proof}[Proof of Corollary~\ref{cor:conv_min}]
Owing to Lemma~\ref{lem:unif_bound_norm_min}, we have that
\begin{equation} \label{eq:min_restr_min_full}
    \min_{\U}\F^{N,\beta} = \min_{\U_\beta}\F^{N,\beta}
\end{equation}
for every $N\in \NN\cup\{ \infty \}$.
Moreover, since the restricted functionals 
$\F^{N,\beta}:\U_\beta\to\R_+$ are $\Gamma$-convergent in
virtue of Theorem~\ref{thm:G_conv}, from 
\cite[Corollary~7.20]{D93} we obtain that 
\begin{equation} \label{eq:conv_min_restr}
    \lim_{N\to\infty}\min_{\U_\beta}\F^{N,\beta} =
    \min_{\U_\beta}\F^{\infty,\beta}.
\end{equation}
Combining \eqref{eq:conv_min_restr} and
\eqref{eq:min_restr_min_full}, we deduce 
\eqref{eq:conv_min}.
As regards the pre-compactness of the minimizers, let 
us consider a sequence $(\hat u_N)_{N\geq1}$ 
such that $\hat u_N\in \arg \min_{\U} \F^{N,\beta}$
for every $N\geq1$.
Using again \cite[Corollary~7.20]{D93}, it follows that
$(\hat u_N)_{N\geq1}$ is pre-compact with respect to the
weak topology of $L^2$, and that its limiting points
are minimizers of $\F^{\infty,\beta}$.
Let $(\hat u_{N_m})_{m\geq1}$ be
a sub-sequence such that 
$\hat u_{N_m}\weak_{L^2}\hat u_\infty$ as
$m\to\infty$. On one hand, using \eqref{eq:conv_min}
we have that
\begin{equation}\label{eq:limit_min_subseq}
    \lim_{m\to\infty}\F^{N_m,\beta}(\hat u_{N_m})
    = \F^{\infty,\beta}(\hat u_\infty).
\end{equation}
On the other hand, the same argument used to establish 
\eqref{eq:conv_int_cost_Gamma} yields
\begin{equation} \label{eq:conv_int_cost_minimz}
    \lim_{m\to\infty}\int_{K\times K} 
    a(\Phi_{\hat u_{N_m}}(x),y)\,d\gamma_{N_m}(x,y)
    = 
   \int_{K\times K} 
    a(\Phi_{\hat u_{\infty}}(x),y)\,d\gamma_\infty(x,y).
\end{equation}
Therefore, combining \eqref{eq:limit_min_subseq}- \eqref{eq:conv_int_cost_minimz} and recalling
the expression of $\F^{N,\beta}$ in 
\eqref{eq:def_funct_N_Gamma}, we deduce that
\begin{equation*}
    \lim_{m\to\infty}\|\hat u_{N_m}\|_{L^2}
    = \|\hat u_\infty\|_{L^2}.
\end{equation*}
\end{proof}

\subsection{Optimal transport map approximation} \label{subsec:OT_map_appr}
In this subsection we will discuss how the $\Gamma$-convergence result established in the previous part can be exploited for the problem of the optimal transport map approximation.
In this setting, the measures $(\gamma_N)_{N\geq 1}$ are chosen in a specific way.
Indeed, given two probability measures $\mu,\nu \in \Prob(\R^n)$ with supports included in the compact set $K\subset \R^n$, we consider two sequences $(\mu_N)_{N\geq 1}, (\nu_N)_{N\geq 1} \subset \Prob(K)$ such that $\mu_N \weak^* \mu$ and $\nu_N \weak^* \nu$ as $N\to\infty$. 
Moreover, in this part, for every $N\geq 1$ we choose $\gamma_N\in \opt(\mu_N,\nu_N)$, i.e., an optimal transport plan between $\mu_N$ and $\nu_N$ with respect to the Euclidean squared distance (see the definition in \eqref{eq:def_opt_plans}).
In view of practical applications, $\mu_N$ and $\nu_N$ can be thought as discrete (or empirical) approximations of the measures $\mu$  and $\nu$, respectively. Finally, here we set the cost function $a:\R^n\times\R^n\to \R_+$ to be $a(x,y):=|x-y|^2_2$, so that the functionals $\F^{N,\beta}:\U\to \R_+$ have the form
\begin{equation} \label{eq:funct_N_OT_appr}
    \F^{N,\beta}(u) = \int_{\R^n\times\R^n} |\Phi_u(x) - y|^2_2 \, d\gamma_N(x,y) + \frac{\beta}{2} \| u \|_{L^2}^2,
\end{equation}
while the set $\U_\beta$ is defined as in Subsection~\ref{subsec:G_conv} (see \eqref{eq:def_U_beta}).
We are now in position to state the result that motivated this paper.

\begin{theorem}\label{thm:OT_map_appr}
    Let $\mu,\nu \in \Prob(\R^n)$ be two probability measures with supports included in the compact set $K\subset \R^n$, and such that $\mu \ll \mathcal{L}^n$, and let us consider $(\mu_N)_{N\geq 1}, (\nu_N)_{N\geq 1} \subset \Prob(K)$ such that $\mu_N \weak^* \mu$ and $\nu_N \weak^* \nu$ as $N\to\infty$. Let us consider $(\gamma_N)_{N\geq 1}$ such that $\gamma_N\in \opt(\mu_N,\nu_N)$ for every $N\geq 1$.
    Let $\F^{N,\beta}:\U_\beta\to\R_+$ be the functional defined as in \eqref{eq:funct_N_OT_appr} and restricted to the bounded subset $\U_\beta\subset\U$ introduced in \eqref{eq:def_U_beta}.
    Then, if we equip $\U_\beta$ with the weak topology of $L^2$, we have that $\F^{N,\beta}\to_\Gamma\F^{\infty,\beta}$ as $N\to\infty$, where
    \begin{equation}\label{eq:funct_OT_app}
        \F^{\infty,\beta}(u) = \int_{\R^n} |\Phi_u(x)-T(x)|_2^2 \,d\mu(x) + \frac{\beta}{2} \| u \|_{L^2}^2,
    \end{equation}
    and $T:\supp(\mu)\to \supp(\nu)$ is the optimal transport map between $\mu$ and $\nu$ with respect to the Euclidean squared distance.
    Moreover, we have that 
    \begin{equation*}
        \lim_{N\to\infty}\min_{\U}\F^{N,\beta} =
       \min_{\U}\F^{\infty,\beta},
    \end{equation*}
    and, if $\hat u_N\in \arg \min_\U \F^{N,\beta}$ for every $N\geq1$, then the sequence $(\hat u_N)_{N\geq1}$ is pre-compact with respect to the strong topology of $L^2$, and the limiting points are minimizers of the $\Gamma$-limit $\F^{\infty,\beta}$.
\end{theorem}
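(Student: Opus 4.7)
The plan is to reduce Theorem~\ref{thm:OT_map_appr} to the general $\Gamma$-convergence result of Theorem~\ref{thm:G_conv} and to Corollary~\ref{cor:conv_min}, after identifying the weak limit of the couplings $(\gamma_N)_{N\geq 1}$ with the graph of the optimal transport map $T$.

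\textbf{Step 1: weak convergence of the couplings.} Since $\mathrm{supp}(\mu_N),\mathrm{supp}(\nu_N)\subset K$ for every $N\geq 1$, any plan $\gamma_N\in\mathrm{Adm}(\mu_N,\nu_N)$ satisfies $\mathrm{supp}(\gamma_N)\subset K\times K$. In particular, the hypotheses of Proposition~\ref{prop:conv_opt_plans} are satisfied, so $(\gamma_N)_{N\geq 1}$ is weakly pre-compact and every weak cluster point belongs to $\opt(\mu,\nu)$. Because $\mu\ll\mathcal{L}_{\R^n}$, Brenier's theorem (invoked also in Proposition~\ref{prop:isotopic}) yields uniqueness of the optimal plan in $\opt(\mu,\nu)$, which is exactly $\gamma_\infty:=(\Id,T)_\sharp\mu$. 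A standard argument by subsequences then shows that the whole sequence converges: $\gamma_N\weak^*\gamma_\infty$ as $N\to\infty$.

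\textbf{Step 2: $\Gamma$-convergence of the functionals.} With $a(x,y):=|x-y|_2^2$, which is smooth, and with $\mathrm{supp}(\gamma_N)\subset K\times K$ for every $N\in\NN\cup\{\infty\}$, the assumptions of Theorem~\ref{thm:G_conv} are met. We conclude that the restricted functionals $\F^{N,\beta}:\U_\beta\to\R_+$ $\Gamma$-converge, with respect to the weak topology of $L^2$, to the functional
\begin{equation*}
u\mapsto \int_{\R^n\times\R^n}|\Phi_u(x)-y|_2^2\,d\gamma_\infty(x,y)+\frac{\beta}{2}\|u\|_{L^2}^2.
\end{equation*}
Using \eqref{eq:char_pushforward} with the continuous function $\varphi(x,y):=|\Phi_u(x)-y|_2^2$ and the identity $\gamma_\infty=(\Id,T)_\sharp\mu$, the integral term rewrites as $\int_{\R^n}|\Phi_u(x)-T(x)|_2^2\,d\mu(x)$, yielding the expression of $\F^{\infty,\beta}$ in \eqref{eq:funct_OT_app}.

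\textbf{Step 3: convergence of minimizers.} The convergence of the minimum values and the pre-compactness of the minimizers with respect to the $L^2$-strong topology follow directly from Corollary~\ref{cor:conv_min}, whose hypotheses coincide with those of Theorem~\ref{thm:G_conv} already verified above. Limit points of any sequence $(\hat u_N)_{N\geq 1}$ of minimizers are minimizers of $\F^{\infty,\beta}$ by the abstract consequence of $\Gamma$-convergence combined with the equi-coercivity provided by Lemma~\ref{lem:unif_bound_norm_min}.

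The only non-routine point is Step~1, namely the identification of the weak limit $\gamma_\infty$: it is crucial that $\mu$ is absolutely continuous so that the limiting optimal plan is unique and concentrated on the graph of $T$; without this assumption we would only obtain subsequential convergence to some element of $\opt(\mu,\nu)$, and the $\Gamma$-limit would depend on the chosen subsequence. Once this identification is made, the rest is a direct translation of the general result to the optimal-transport setting.
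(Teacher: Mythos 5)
Your proposal is correct and follows essentially the same route as the paper: identify the weak limit of the optimal couplings via Proposition~\ref{prop:conv_opt_plans} together with Brenier's theorem (using $\mu\ll\mathcal{L}^n$ for uniqueness of the optimal plan), and then invoke Theorem~\ref{thm:G_conv} and Corollary~\ref{cor:conv_min} with $a(x,y)=|x-y|_2^2$. The additional details you spell out (the subsequence argument upgrading cluster-point convergence to convergence of the whole sequence, and the rewriting of the integral term via the pushforward identity) are exactly the steps the paper leaves implicit.
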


\begin{proof}
    From Proposition~\ref{prop:conv_opt_plans} it follows that the sequence $(\gamma_N)_{N\geq 1}$ is pre-compact and that the limiting points are included in $\opt(\mu,\nu)$. Since $\mu\ll \mathcal{L}^n$, from Brenier's Theorem (see, e.g., \cite[Theorem~2.26]{AG}) we deduce that $\opt(\mu,\nu) = \{(\Id,T)_\# \mu\}$, where $T:\supp(\mu)\to \supp(\nu)$ is the optimal transport map between $\mu$ and $\nu$. Therefore, we have that $\gamma_N \weak^* \gamma_\infty$ as $N\to\infty$, where we set $\gamma_\infty:=(\Id,T)_\# \mu$. 
    Then, the theses are a direct consequence of Theorem~\ref{thm:G_conv} and of Corollary~\ref{cor:conv_min}.
\end{proof}

{
\begin{remark} \label{rmk:nonopt_plans}
We observe that the conclusion of the previous result holds as well even when the coupling $\gamma_N$ has not been obtained by solving the discrete optimal transport problem between $\mu_N$ and $\nu_N$. 
Namely, as soon as $\gamma_N\weak^*\gamma=(\mathrm{Id},T')_\#\mu$ as $N\to\infty$ for a measurable transport map $T':\R^d\to\R^d$, the $\Gamma$-convergence result holds, after substituting $T'$ to $T$ in \eqref{eq:funct_OT_app}.
Nevertheless, in view of applications, thinking $\gamma_N$ as an (approximate) optimal coupling looks particularly convenient, since we can take advantage of well-established and efficient computational methods (see e.g. \cite{Cut13, PeyCut}).
Moreover, in the case of a generic transport map $T'$, we lack an approximation result analogous to Corollary~\ref{cor:ot_map_approx}, unless $T'$ is not in turn a diffeomorphism isotopic to the identity.
\end{remark}
}

\begin{remark}\label{rmk:app_OT_map_L2}
    We observe that, under the same assumptions as in Corollary~\ref{cor:ot_map_approx}, for every $\varepsilon>0$, there exists $\bar \beta>0$ such that, for every $\beta\in (0,\bar \beta]$, we have $\kappa(\beta)\leq \varepsilon$, where $\kappa:[0,+\infty)\to [0,+\infty)$ is defined as
    \begin{equation}\label{eq:def_kappa}
        \kappa(\beta):= \sup \left\{ \int_{\R^n} |\Phi_u(x)-T(x)|_2^2 \,d\mu(x) : u\in \arg\min \F^{\infty,\beta} \right\}.
    \end{equation}
    Indeed, given $\varepsilon>0$, in virtue of Corollary~\ref{cor:ot_map_approx}, there exists a control $\tilde u \in \U$ such that
    \begin{equation*}
        \sup_{x\in K} |\Phi_{\tilde u}(x)- T(x)|_2^2 \leq \frac\e2.
    \end{equation*}
    Moreover, if we choose $\bar \beta>0$ such that $\bar \beta \|\tilde u\|_{L^2}^2 = \e $, then, for every $\beta\in (0,\bar \beta ]$, we obtain $\F^{\infty,\beta}(\tilde u) \leq \e$. 
    Being $\tilde u\in \U$ a competitor for the minimization of $\F^{\infty,\beta}$, we deduce that $\kappa(\beta)\leq \varepsilon$  for every $\beta\in (0,\bar \beta]$. We report that this argument has already been used in \cite[Proposition~5.4]{S_deep}.
    This observation guarantees that, by tuning the parameter $\beta>0$ to be small enough, if $\hat u_\beta \in \arg\min \F^{\infty,\beta}$, then the corresponding flow $\Phi_{\hat u_\beta}$ provides an approximation of the optimal transport map $T:\supp(\mu)\to \supp(\nu)$ which is arbitrarily accurate in the $L^2_\mu$-strong topology.
    The interesting aspect is that an approximation of $T$ can be carried out by minimizing a functional over the Hilbert space $\U$ of the admissible controls.
    Even though handling $\F^{\infty,\beta}$ already requires the knowledge of the optimal transport map $T$, the $\Gamma$-convergence result ensures that we can construct the approximation by minimizing the functionals $\F^{N,\beta}$ instead of  $\F^{\infty,\beta}$.  
    In Remark~\ref{rmk:triang_ineq_app_OT} we discuss in detail the more applicable situation when dealing with discrete approximations $\mu_N,\nu_N$ of $\mu,\nu$, respectively.
    Finally, we stress the fact that, in general, this approach does not provide a reconstruction of the optimal transport map that is close also in the $C^0$-norm.
\end{remark}

\begin{remark}\label{rmk:triang_ineq_app_OT}
    In view of a possible practical implementation, we recall that we aim at producing a flow $\Phi_u:\R^n\to \R^n$ with a suitable control $u\in \U$ such that the distance $W_2 (\Phi_{u \, \# }\mu, \nu)$ is as small as desired, where $\mu, \nu$ are probability measures satisfying the same assumptions as in Corollary~\ref{cor:ot_map_approx}.
    Here it is important to stress that $\mu$ and $\nu$ do not play a symmetric role in the applications: indeed, it is convenient to understand $\mu$ as a known object (i.e., whose density is known, or which it is inexpensive to sample from), while $\nu$ denotes a probability measure which we have limited information about, and it is complicated (but not impossible) to gather new samplings.
    In this framework, we imagine that we have at our disposal discrete approximations $\mu_N, \nu_N$ of $\mu, \nu$, respectively. We provide below an asymptotic estimate of $W_2 (\Phi_{u \, \# }\mu, \nu)$ for large $N$ when $u$ is obtained by minimizing the functional $\F^{N,\beta}$ defined in \eqref{eq:funct_N_OT_appr}.
    Namely, if we take $\hat u_{N, \beta} \in \arg \min_\U \F^{N, \beta}$, when $N\gg 1$ we have
    \begin{equation} \label{eq:asympt_err}
         W_2(\Phi_{\hat u_{N,\beta} \, \# }\mu, \nu) \leq
          L_\beta W_2(\mu, \mu_N) +
          2\sqrt{\kappa(\beta)}
          +  W_2(\nu_N, \nu),
    \end{equation}
    where $L_\beta \to +\infty$ and $\kappa(\beta)\to 0$ as $\beta \to 0$.
    To see that, using the triangular inequality, we compute for any $u\in \mathcal{U}$
    \begin{equation} \label{eq:tri_ineq_W}
        W_2(\Phi_{u \, \# }\mu, \nu) \leq 
        L_{\Phi_{u}} W_2(\mu, \mu_N) + 
        W_2 (\Phi_{u \, \# }\mu_N, \nu_N) +
        W_2(\nu_N, \nu),
    \end{equation}
    where $L_{\Phi_{u}}$ denotes the Lipschitz constant of the flow $\Phi_u$.
    In addition, if $\gamma_N \in \opt(\mu_N, \nu_N)$, we observe that
    \begin{equation*}
        W_2^2(\Phi_{u \, \# }\mu_N, \nu_N) 
        \leq \int_{\R^n\times \R^n} |\Phi_u(x) - y|_2^2 \, d\gamma_N(x,y),
    \end{equation*}
    where we used the fact that $(\Phi_u, \Id)_\#\gamma_N\in \mathrm{Adm}(\Phi_{u \, \#}\mu_N, \nu_N)$.
    For every $N\geq 1$, let us finally consider $\hat u_{N, \beta} \in \arg \min_\U \F^{N, \beta}$.
    Using the same computations as in \eqref{eq:conv_int_cost_minimz}, it turns out that
    \begin{equation*}
        \limsup_{N\to \infty} \int_{\R^n\times \R^n} |\Phi_{\hat u_{N,\beta}}(x) - y|_2^2 \, d\gamma_N(x,y)
        \leq
        \kappa(\beta),
    \end{equation*}
    where $\kappa:[0,+\infty)\to [0,+\infty)$ is the application defined in \eqref{eq:def_kappa}.
    Combining the last two inequalities, we deduce that
    \begin{equation} \label{eq:asympt_est_training}
        \limsup_{N\to\infty} W_2(\Phi_{\hat u_{N,\beta} \, \# }\mu_N, \nu_N) \leq \sqrt{\kappa(\beta)}.
    \end{equation}
    Moreover, since Lemma~\ref{lem:unif_bound_norm_min} guarantees that $\| \hat u_{N,\beta} \|_{L^2} \leq C/\beta$ for every $N\geq 1$, it follows from Lemma~\ref{lem:Lip_flow} that there exists a constant $L_\beta>0$ independent on $N$ such that $L_{\Phi_{\hat u_{N,\beta}}}\leq L_\beta$.
    Using this consideration and \eqref{eq:asympt_est_training}, from \eqref{eq:tri_ineq_W} we obtain the asymptotic estimate \eqref{eq:asympt_err}. We recall that in \eqref{eq:asympt_err} $L_\beta \to +\infty$ and $\kappa(\beta)\to 0$ as $\beta \to 0$. The constant $L_\beta$ may be large for $\beta$ close to $0$, however this is mitigated by the fact that $W_2(\mu,\mu_N)$ can be made small at a reasonable cost.
\end{remark}

\begin{remark}\label{rmk:geodesics}
    For every $u\in \U$, let $\Phi_u^{(0,t)}:\R^n\to\R^n$ be the flow induced by evolving the linear-control system \eqref{eq:lin_ctrl} in the time interval $[0,t]$, for every $t\leq 1$. If, for a given $u\in \U$, the final-time flow $\Phi_u=\Phi_u^{(0,1)}$ provides an approximation of the optimal transport map $T$ between $\mu$ and $\nu$ with respect to the squared Euclidean distance, a natural question is whether the curve $t\mapsto \Phi_{u\, \#}^{(0,t)} \mu$ is close to the Wasserstein $W_2$-geodesic that connects $\mu$ to $\nu$.
    In general, the answer is negative. However, it is possible to construct an approximation of the Wasserstein geodesic using the final-time flow $\Phi_u$.
    Indeed, the $W_2$-geodesic connecting $\mu$ to $\nu$ has the form $t\mapsto \eta_t:= ((1-t)\Id + t T)_\# \mu$ (see, e.g., \cite[Remark~3.13]{AG}). Similarly, exploiting the fact that $\Phi_u$ is close to $T$, we can define the curve $t\mapsto\tilde \eta_t:=((1-t)\Id + t \Phi_u)_\# \mu$, and we can compute
    \begin{equation*}
        \begin{split}
            W_2^2(\eta_t,\tilde \eta_t) &=
            W^2_2\big(((1-t)\Id + t T)_\# \mu, ((1-t)\Id + t \Phi_u)_\# \mu\big)\\
            &\leq t^2 \int_{\R^n} |\Phi_u(x)- T(x)|_2^2\, d\mu(x) = t^2 \| \Phi_u - T \|_{L^2_\mu}^2,
        \end{split}
    \end{equation*}
    i.e., we can estimate instant-by-instant the deviation of $\tilde \eta$ from the geodesic connecting $\mu$ to $\nu$ in terms of the $L^2_\mu$ distance between $T$ and $\Phi_u$.
    This is relevant, since the latter is precisely the integral term involved in the functional \eqref{eq:funct_OT_app}.
\end{remark}


\section{Numerical approximation of the optimal transport map}
\label{sec:num_approx}
In this section, we propose a numerical approach for the construction of a \textit{normalizing flow} $\Phi_u:\R^n\to\R^n$ generated by a linear-control system, such that the push-forward $\Phi_{u\, \#}\mu$ is close to $\nu$ in  the $W_2$-distance, where $\mu,\nu$ are two assigned probability measures on $\R^n$.
In order to consider a more realistic framework, we deal with $\mu_N, \nu_N$, that represent discrete probability measures with small $W_2$-distance to $\mu,\nu$, respectively.
On one hand, under the assumption that the measure $\mu$ is known, the construction of $\mu_N$ can be customized by the user. 
In general, the problem of approximating a probability measure with a convex combination of a fixed number of Dirac deltas is currently an active topic of research (see, e.g., \cite{MSS21}).
On the other hand, the measure $\nu_N$ should be thought as assigned.
After the preliminary computation of an optimal transport plan between $\mu_N$ and $\nu_N$ with respect to the Euclidean squared norm, we shall write an optimal control problem, and we address its numerical resolution with an iterative method originally proposed in \cite{SS80} and based on the Pontryagin Maximum Principle.


\subsection{Preliminary optimal transport problem} \label{subsec:prel_OT_problem}
The first step for the construction of the functional $\F^{N,\beta}:\U\to\R$ defined as in \eqref{eq:funct_N_OT_appr} is the computation of an optimal transport plan $\gamma_N\in\opt(\mu_N,\nu_N)$. In this case, for every $u\in \U$ the functional $\F^{N,\beta}$ can be rewritten as follows:
\begin{equation} \label{eq:funct_real_problems}
    \F^{N,\beta}(u) = \sum_{\substack{i=1,\ldots,N_1 \\ j=1,\ldots,N_2}} \gamma_N^{i,j}|\Phi_u(x_i)-y_j|_2^2
    + \frac\beta2 \| u \|_{L^2}^2,
\end{equation}
where $\supp(\mu_N)=\{ x_1,\ldots,x_{N_1} \}$, $\supp(\nu_N)=\{ y_1,\ldots,y_{N_2} \}$, and $\gamma_N = (\gamma_N^{i,j})^{j=1,\ldots,N_2}_{i=1,\ldots,N_1  }$ is the optimal transport plan.
It is well-known (see  \cite[Proposition~3.4]{PeyCut} and \cite[Theorem~8.1.2]{Brualdi}) that, if $\#\supp(\mu_N)=N_1$ and $\#\supp(\nu_N)=N_2$, then, there exists at least an optimal transport plan $\gamma_N\in\opt(\mu_N,\nu_N)$ such that $\#\supp(\gamma_N)\leq N_1 + N_2$ (see also \cite{AV22} for further details).
In our case, having a \textit{sparse} optimal transport plan (i.e. $\#\supp(\gamma_N)\ll N_1N_2$) is useful to alleviate the computations, since this reduces the number of terms that appear in the sum in \eqref{eq:funct_real_problems}. 
In order to achieve that while computing numerically $\gamma_N = (\gamma_N^{i,j})^{j=1,\ldots,N_2}_{i=1,\ldots,N_1}$, it could be appropriate to introduce a \textit{quadratic regularization}
(see, e.g., \cite{BSR18,LMM21}).


\subsection{Pontryagin Maximum Principle}\label{subsec:PMP}
In this subsection we formulate the necessary optimality conditions for the minimization of the functional $\F^{N,\beta}$ defined in \eqref{eq:funct_real_problems}.
We observe that this minimization can be naturally formulated as an optimal control problem in $(\R^n)^{N_1}$, where $N_1\geq1$ stands for the number of atoms $\{x_1,\ldots,x_{N_1}\}$ that constitute the probability measure $\mu_N$.
More precisely, if we denote by $Z=(z_1,\ldots,z_{N_1})$ a point in $(\R^n)^{N_1}$, the control system that we consider has the form
\begin{equation}\label{eq:ext_ctrl_PMP}
    \begin{cases}
        \dot z_i(t) = F(z_i(t))u(t) &\mbox{a.e. in }[0,1],\\
        z_i(0) = x_i,
    \end{cases}
    \quad \mbox{for } i=1,\ldots,N_1,
\end{equation}
where the function $F:\R^n\to\R^{n\times k}$ is the same that prescribes the dynamics in \eqref{eq:lin_ctrl}. We use the notation $Z^u:[0,1]\to (\R^n)^{N_1}$ to indicate the solution of \eqref{eq:ext_ctrl_PMP} corresponding to the admissible control $u\in \U$.
We insist on the fact that the components $z_1,\ldots,z_{N_1}$ are \emph{simultaneously driven} by the control $u\in \U$. Finally, the function associated to the terminal cost (i.e., the first term at the right-hand side of \eqref{eq:funct_real_problems}) is
\begin{equation*}
    Z=(z_1,\ldots,z_{N_1})\mapsto \sum_{\substack{i=1,\ldots,N_1 \\ j=1,\ldots,N_2}} \gamma_N^{i,j}|z_i-y_j|_2^2.
\end{equation*}
We state below the Maximum Principle for our particular optimal control problem. For a detailed and general presentation of the topic the reader is referred to the textbook \cite[Chapter~12]{AS04}. 

\begin{theorem}\label{thm:PMP}
    Let $\hat u\in \U$ be an admissible control that minimizes the functional $\F^{N,\beta}$ defined in \eqref{eq:funct_real_problems}. 
    Let $\mathcal{H}:(\R^n)^{N_1}\times ((\R^n)^{N_1})^* \times\R^k\to\R$
    be the hamiltonian function defined as follows:
    \begin{equation}\label{eq:ham_PMP}
       \mathcal{H}(Z,\Lambda,u) =\sum_{i=1}^{N_1}
        \lambda_{i}\cdot  F(z_i)u 
        - \frac\beta2|u|^2,
    \end{equation}
    where we set $Z=(z_{1},\ldots,z_{N_1})$ and $\Lambda=(\lambda_{1},\ldots,\lambda_{N_1})$, with $\lambda_i \in (\R^n)^*$. 
    Then there exists an absolutely continuous function $\Lambda^{\hat u}:[0,1]\to(\R^n)^{N_1}$ such that the following conditions hold:
    \begin{itemize}
        \item For every $i=1,\ldots,N_1$  the curve $z^{\hat u}_{i}:[0,1]\to\R^n$ satisfies \begin{equation}\label{eq:ode_traj_PMP}
            \begin{cases}
                \dot z^{\hat u}_{i}(t) =  \frac{\partial
                }{\partial \lambda_{i}}\mathcal{H}(Z^{\hat u}(t),
                \Lambda^{\hat u}(t),\hat u(t))
                &\mbox{a.e. in }[0,1],\\
                z^{\hat u}_{i}(0) = x_i;
            \end{cases}
        \end{equation}
        \item For every $i=1,\ldots,N_1$  the curve $\lambda^{\hat u}_{i}:[0,1]\to (\R^n)^*$ satisfies
        \begin{equation}\label{eq:ode_covec_PMP}
            \begin{cases}
                \dot \lambda^{\hat u}_i \textcolor{red}{(t)} = - \frac{\partial
                }{\partial \textcolor{red}{z_{i}}}\mathcal{H}(Z^{\hat u}(t),
                \Lambda^{\hat u}(t),\hat u(t))
                &\mbox{a.e. in }[0,1],\\
                \lambda^{\hat u}_{i}(1) = - \sum_{ j=1,\ldots,N_2} \gamma_N^{i,j}(z_i^{\hat u}(1)-y_j);
            \end{cases}
        \end{equation}
        \item For a.e. $t\in[0,1]$, the following condition is satisfied:
        \begin{equation}\label{eq:max_cond_PMP}
            \hat u(t) \in \arg \max_{u\in \R^k}
            \mathcal{H}(Z^{\hat u}(t), \Lambda^{\hat u}(t),u).
        \end{equation}
    \end{itemize}
\end{theorem}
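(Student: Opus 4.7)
The plan is to recast the minimization of $\F^{N,\beta}$ as a classical Bolza-type free-endpoint optimal control problem on the extended state space $(\R^n)^{N_1}$ and then invoke the Pontryagin Maximum Principle in the form recorded in \cite[Chapter~12]{AS04}. Concretely, I stack the $N_1$ simultaneously driven trajectories into the single state $Z=(z_1,\ldots,z_{N_1})$ governed by \eqref{eq:ext_ctrl_PMP}, with terminal cost $\varphi(Z):=\sum_{i,j}\gamma_N^{i,j}|z_i-y_j|_2^2$ and running cost $L(u):=\tfrac{\beta}{2}|u|_2^2$. The stacked dynamics is smooth in $Z$ (since each $F_i$ is) and linear in $u$, while $\varphi$ is smooth and $L$ is strictly convex in $u$; this is precisely the regularity demanded by the classical PMP.

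Since there are no endpoint constraints, the optimal trajectory admits a \emph{normal} extremal lift, so the abnormal multiplier can be set to $1$. In the convention of \cite{AS04}, the Hamiltonian then reads $\mathcal{H}(Z,\Lambda,u)=\langle \Lambda,\tilde F(Z)u\rangle - L(u)$, where $\tilde F(Z)u$ denotes the stacked right-hand side of \eqref{eq:ext_ctrl_PMP}; separating the action of $\Lambda=(\lambda_1,\ldots,\lambda_{N_1})$ on each component recovers the expression \eqref{eq:ham_PMP}.

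The PMP now immediately supplies an absolutely continuous covector curve $\Lambda^{\hat u}:[0,1]\to ((\R^n)^{N_1})^*$ satisfying the coupled Hamiltonian system $\dot Z^{\hat u}=\partial_\Lambda\mathcal{H}$ and $\dot\Lambda^{\hat u}=-\partial_Z\mathcal{H}$, which, decomposed component-wise, produces \eqref{eq:ode_traj_PMP}--\eqref{eq:ode_covec_PMP}; the pointwise maximization condition \eqref{eq:max_cond_PMP} is again directly the general statement of the PMP, noting that $\mathcal{H}$ is strictly concave in $u$ owing to the $-\tfrac{\beta}{2}|u|^2$ term, so that the maximizer is well defined. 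The transversality condition $\Lambda^{\hat u}(1)=-\nabla_Z\varphi(Z^{\hat u}(1))$ gives, component by component, $\lambda_i^{\hat u}(1)=-\sum_j\gamma_N^{i,j}(z_i^{\hat u}(1)-y_j)$ (up to an overall normalization constant that can be absorbed into $\gamma_N$), matching the endpoint condition in \eqref{eq:ode_covec_PMP}.

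The main obstacle is essentially bookkeeping rather than analysis: one must verify that the sign conventions, the normalization between running cost and Hamiltonian, and the gradient of the terminal cost all align with the form used in \cite{AS04}, so as to reproduce exactly the statements \eqref{eq:ode_traj_PMP}--\eqref{eq:max_cond_PMP}. Beyond this, the proof is a direct specialization of the classical maximum principle to the simultaneously controlled ensemble dynamics \eqref{eq:ext_ctrl_PMP}, with no further analytic difficulty given the smoothness and convexity already verified above.
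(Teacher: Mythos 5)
Your proposal is correct and takes essentially the approach the paper intends: the paper states Theorem~\ref{thm:PMP} without proof, deferring to \cite[Chapter~12]{AS04}, and the intended justification is precisely your specialization of the classical normal Pontryagin Maximum Principle to the stacked free-endpoint Bolza problem on $(\R^n)^{N_1}$, with normality following from the absence of terminal constraints (cf.\ the remark after the theorem). The only bookkeeping item worth making explicit is the factor $2$ arising from differentiating $|z_i-y_j|_2^2$ in the transversality condition, which you correctly flag as a normalization matter.
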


\begin{remark}
    In Theorem~\ref{thm:PMP} we stated the Pontryagin Maximum Principle for normal extremals only. This is due to the fact that the optimal control problem concerning the minimization of \textcolor{red}{$\F^{N,\beta}$} does not admit abnormal extremals.
\end{remark}


\subsection{Algorithm description}

In this subsection we describe the implementable algorithm that we employed to carry out the numerical simulation described in the next section. We address the numerical minimization of the functional $\F^{N,\beta}$ introduced in \eqref{eq:funct_real_problems} using the iterative method proposed in \cite{SS80}, based on the Pontryagin Maximum Principle.
This approach has been recently applied in \cite{S_deep,S_ens} for the task of recovering a diffeomorphism from observations, and for the simultaneous optimal control of an ensemble of systems, respectively.

Before proceeding, we describe the discretization of the dynamics \eqref{eq:ext_ctrl_PMP} and how we reduce the minimization of \eqref{eq:funct_real_problems} to a finite dimensional problem. 
Let us consider the evolution time horizon $[0,1]$, and for $M\geq 2$ let us take the equispaced nodes $\{ 0, \frac1M,\ldots, \frac{M-1}M, 1 \}$. Recalling that $\U:=L^2([0,1],\R^k)$, we define the subspace $\U_M\subset \U$ as follows:
\begin{equation*} 
u\in \U_M \iff u(t) =
\begin{cases}
u_1& \mbox{if } 0\leq t< \frac1M\\
\vdots\\
u_M & \mbox{if } \frac{M-1}{M}\leq t\leq 1,
\end{cases}
\end{equation*}
where $u_1,\ldots,u_M\in \R^k$.
For every $l=1,\ldots,M$, we shall write $u_l = (u_{1,l},\ldots,u_{k,l})$ to denote the components of $u_l\in\R^k$. Then, any element $u\in\U_M$ will be represented by the following array:
\begin{equation*}
u = (u_{j,l})^{j=1,\ldots,k}_{l=1,\ldots,M}.
\end{equation*}
For every $i=1,\ldots,N_1$, let $z^u_{i}:[0,1]\to \R^n$ be the solution of \eqref{eq:ext_ctrl_PMP} corresponding to the $i$-th athom of the measure $\mu_N$ and to the control $u$.
Then, for every $i=1,\ldots,N_1$ and $l=0,\ldots,M$, we define the array that collects the evaluation of the trajectories at the time nodes:
\begin{equation*}
(z^l_i)_{i=1,\ldots,N_1}^{l=0,\ldots,M}, \qquad
z^l_i := z^u_{i}\left({l}/{M}\right) \in \R^n,
\end{equation*}
where we dropped the reference to the control that generates the trajectories. This is done to avoid hard notations, since we hope that it will be clear from the context the correspondence between trajectories and control.
For the approximate resolution of the \emph{forward dynamics} \eqref{eq:ext_ctrl_PMP} we use the explicit Euler scheme, i.e.,
\begin{equation*}
    z_i^0 = x_i, \qquad z_i^{l+1} = z_i^l + \frac1M F(z_i^l)u_l
\end{equation*}
for $i=1,\ldots,N_1, \ l=0,\ldots,M-1$.
Similarly, for every $i=1,\ldots,N_1$, let $\lambda^u_i:[0,1]\to(\R^n)^*$ be the solution of \eqref{eq:ode_covec_PMP} corresponding to the control $u$, and let us introduce the corresponding array of the evaluations:
\begin{equation*}
(\lambda^l_i)_{i=1,\ldots,N_1}^{l=0,\ldots,M},\qquad
\lambda^l_i:= \lambda^u_i\left( {l}/{M} \right)\in (\R^n)^*,
\end{equation*}
and we approximate the \emph{backward dynamics} \eqref{eq:ode_covec_PMP} with the implicit Euler scheme:
\begin{equation*}
    \lambda_i^M = - \sum_{ j=1,\ldots,N_2} \gamma_N^{i,j}(z_i^M-y_j), 
    \qquad  \lambda_i^{l-1} = \lambda_i^l + \frac1M \left( \lambda_i^{l-1} \cdot  \frac{\partial}{\partial z}F(z_i^{l-1})u_l \right)
\end{equation*}
for $i=1,\ldots,N_1, \ l=M,\ldots,1$.\\
The method is described in Algorithm~\ref{alg:iter_PMP}.

\begin{algorithm}
\scriptsize
\KwData{ 
\begin{itemize}
\item $F:\R^n\to \R^{n\times k}$ controlled fields;
\item $(x_i)_{i=1,\ldots,N_1}$ atoms of $\mu_N$;
\item $(y_i)_{i=1,\ldots,N_2}$ atoms of $\nu_N$;
\item $\gamma_N = 
(\gamma_N^{i,j})^{j=1,\ldots,N_2}_{i=1,\ldots,N_1}
\in \opt(\mu_N,\nu_N)$.
\end{itemize}
{\bf Algorithm setting:} 
$M = $ n. sub-intervals of $[0,1]$, $h=\frac1M$, 
$0<\tau<1$, 
$\rho >0$, $\max_{\mathrm{iter}}\geq 1$}

Initial guess for $u\in \U_M$\;

\For(\tcp*[f]{First computation of  
trajectories}){$i=1,\ldots,N_1$ }{
		Compute $(z^l_i)^{l=1,\ldots,M}$ using
		$(u_l)_{l=1,\ldots,M}$ and $x_i$\;
	}

$\mathrm{Cost}\gets \sum_{i=1,\ldots,N_1}^{j=1,\ldots,N_2} \gamma_N^{i,j}|z^M_i-y_j|_2^2
    + \frac\beta2 \| u \|_{L^2}^2$\;
$\mathrm{flag}\gets 1$\;
\For(\tcp*[f]{Iterations of Iterative Maximum Principle}){$r=1,\ldots,\max_{\mathrm{iter}}$ }{
	\If(\tcp*[f]{Update covectors only if necessary}){$\mathrm{flag}=1$ }{
		\For(\tcp*[f]{Backward computation of 
		covectors }){$i=1,\ldots,N_1$ }{
			$\lambda^M_i\gets - \sum_{ j=1}^{N_2} \gamma_N^{i,j}(z_i^M-y_j)
			$\;
			Compute $(\lambda^l_i)^{l=0,\ldots,M-1}$
			using $(u_l)_{l=1,\ldots,M}$, 
			$\textcolor{red}{(z_i^l)}^{l=0,\ldots,M}$ and 
			$\lambda^M_i$\;					
			}
		}
		
	$(z_i^{0,\mathrm{new}})^{i=1,\ldots,N_1}\gets (z_i^{0})^{i=1,\ldots,N_1}$\;
	$(\lambda_i^{0,\mathrm{corr}}
	)_{i=1,\ldots,N_1}\gets 
	(\lambda^0_i)_{i=1,\ldots,N_1}$\;
	\For(\tcp*[f]{Update of controls and trajectories}){$l=1,\ldots,M$ 
	}{

		$u_l^{\mathrm{new}} \gets
\arg \max_{ v\in \R^k}
\left\{
\sum_{i=1}^{N_1}  \left(
\lambda^{l-1,\mathrm{corr}}_{i}\cdot 
F(z_{i}^{l-1,\mathrm{new}})\cdot v \right) 
- \frac{\beta}{2}|v|^2_2
-\frac{1}{2\rho}|v-u_l|^2_2
\right\}$\;
	\For{$i=1,\ldots,N_1$}{
	Compute
	$z_i^{l,\mathrm{new}}$ using 
	$z_{i}^{l-1,\mathrm{new}}$ and 
	$u_l^{\mathrm{new}}$\;
	$\lambda_i^{l, \mathrm{corr}}
	\gets \textcolor{red}{\lambda^l_i } 
	+ \sum_{ j=1}^{N_2} \gamma_N^{i,j}(z_i^l-y_j) 
	- \sum_{ j=1}^{N_2} \gamma_N^{i,j}(z_i^{l, \mathrm{new}}-y_j)$\;}
		}

	$\mathrm{Cost^{new}}\textcolor{red}{\gets}
\sum_{i=1,\ldots,N_1}^{j=1,\ldots,N_2} \gamma_N^{i,j}|z^{M, \mathrm{new}}_i-y_j|_2^2
    + \frac\beta2 \| u \|_{L^2}^2$\;
	\eIf(\tcp*[f]{Backtracking for $\rho$}){$\mathrm{Cost}> \mathrm{Cost^{new}}$ }{
		$u\gets u^{\mathrm{new}}$,
		$z\gets z^{\mathrm{new}}$\;
		$\mathrm{Cost}\gets \mathrm{Cost^{new}}$\;
		$\mathrm{flag} \gets 1$\;
		}
		{
		$\gamma \gets \tau \gamma$\;
		$\mathrm{flag} \gets 0$\;
		}
	}
\caption{Iterative Maximum Principle}
\label{alg:iter_PMP}
\end{algorithm}

\begin{remark}\label{rmk:corr_PMP}
    The correction for the value of the covector at the line 20 of Algorithm~\ref{alg:iter_PMP} is not present in the original scheme proposed in \cite{SS80}, where the authors considered optimal control problems without end-point cost.
\end{remark}

\begin{remark} \label{rmk:max_hamilt}
    The maximization of the augmented Hamiltonian in line~17 of Algorithm~\ref{alg:iter_PMP} is a rather inexpensive step, since we have to deal with a quadratic function whose Hessian is diagonal. This is a beneficial consequence of the linear-control dynamics, resulting in the fact that the first term of the augmented Hamiltonian is linear in $v$ (see again line~17).
    In the case of a standard neural ODE, we would have  $\arg \max_{ v\in \R^k}
\left\{
\sum_{i=1}^{N_1}  \left(
\lambda^{l-1,\mathrm{corr}}_{i}\cdot 
G(z_{i}^{l-1,\mathrm{new}}, v) \right) 
- \frac{\beta}{2}|v|^2_2
-\frac{1}{2\rho}|v-u_l|^2_2
\right\}$, resulting in a non-quadratic (and potentially non-concave) maximization problem, whose resolution may be expensive.
\end{remark}

\begin{remark} \label{rmk:grad_method}
    As an alternative, it is possible to address the minimization of the cost functional $\F^{N,\beta}:\U\to \R$ using a gradient flow approach. Namely, it is possible to project the gradient field induced by $\F^{N,\beta}$ onto the finite dimensional subspace $\U_M$.
    We recall that in \cite{S_grad} the gradient flows related to linear-control problems have been studied theoretically, while in \cite{S_deep, S_ens} the gradient-based algorithm outlined above has been implemented and tested.
    In general, it has slightly worse per-iteration performances than the PMP-based algorithm, but it is more suitable for parallel computations.
\end{remark}


\subsection{A numerical experiment}

We present here a numerical experiment in $\R^2$ that we used to validate our approach.
In this case, we considered as reference measure $\mu$ the uniform probability measure supported in the disc centered at the origin and with radius $R=0.5$, and we constructed $\mu_N$ with a uniform triangulation of $\supp(\mu)$ with size $0.04$, resulting in $571$ equally-weighted atoms (see Figure~\ref{fig:Experiments}).
Then, we took the convex function $f:\R^2 \to\R$ defined as 
\begin{equation*}
    f(x) = \sqrt{(x-v)^\top Q (x-v) + 2}, 
    \quad v= \left( \begin{matrix}
        0.5\\
        0.5
    \end{matrix} \right)
    \quad Q= \left( \begin{matrix}
        3 & 1\\
        1 & 2
    \end{matrix} \right),
\end{equation*}
and we set $T:=\nabla_x f$. 
Then, we defined $\nu:= T_\# \mu$, and we obtained the empirical measure $\nu_N$ by sampling $1500$ i.i.d. data-points from $\mu$, and by transforming them using $T$. In this way, we got $1500$ independent samplings from $\nu$.  
At this point, we used the Python package \cite{POT_library} to compute the optimal transport plan $\gamma_N = (\gamma_N^{i,j})^{j=1,\ldots,N_2}_{i=1,\ldots,N_1}$. Since the problem has modest dimensions, we used the non-regularized solver, and we observed that every optimal transport plan computed satisfied the sparsity bound investigated in \cite{AV22}.
Using the vector fields that had been reported to be the best-performing in \cite{S_deep}, we dealt with the following linear-control system
on the time interval $[0,1]$:
\begin{equation} \label{eq:ctrl_sys_exper}
\begin{split}
\dot x = 
\left(
\begin{matrix}
u_1\\
u_2
\end{matrix}
\right)&
+ e^{-\frac{1}{2\zeta}|x|^2}
\left(
\begin{matrix}
u_1'\\
u_2'
\end{matrix}
\right)
+ \left(
\begin{matrix}
u_1^1 & u^2_1\\
u_2^1 & u_2^2
\end{matrix}
\right)
\left(
\begin{matrix}
x_1\\
x_2
\end{matrix}
\right)
\\ & \qquad+
e^{-\frac{1}{2\zeta}|x|^2}
\left(
\begin{matrix}
u_1^{1,1}x_1^2 + u_1^{1,2}x_1x_2 + u_1^{2,2}x_2^2\\
u_2^{1,1}x_1^2 + u_2^{1,2}x_1x_2 + u_2^{2,2}x_2^2
\end{matrix}
\right),
\end{split}
\end{equation}
where we set $\zeta =10$. We divided the time horizon $[0, 1]$ into $32$ equally-spaced subintervals, corresponding to the discretization step-size $h=2^{-5}$ for \eqref{eq:ctrl_sys_exper}. Finally, we set $\beta = 5\cdot 10^{-4}$ in \eqref{eq:funct_real_problems}, and we minimized $\F^{N,\beta}$ using Algorithm~\ref{alg:iter_PMP}, in order to construct a flow $\Phi_{u}$ of \eqref{eq:ctrl_sys_exper} that could serve as an approximation of $T$.
The results are reported in Figure~\ref{fig:Experiments}.\\
As we can see, the transformed measure $\Phi_{u\, \#}\mu_N$ managed to find correctly the boundary and the shape of the target empirical measure $\nu_N$, as well as the fact that the mass is not uniformly spread over the support of the target measure. 
Finally, in the last picture, we compared $T_\#\mu_N$ and $\Phi_{u \,\#}\mu_N$, i.e., the transformation of the uniform grid over the reference disc through the correct optimal transport map and the computed approximation, respectively, resulting in an accurate reconstruction.

\begin{figure}
    \centering
    \includegraphics[scale=0.44]{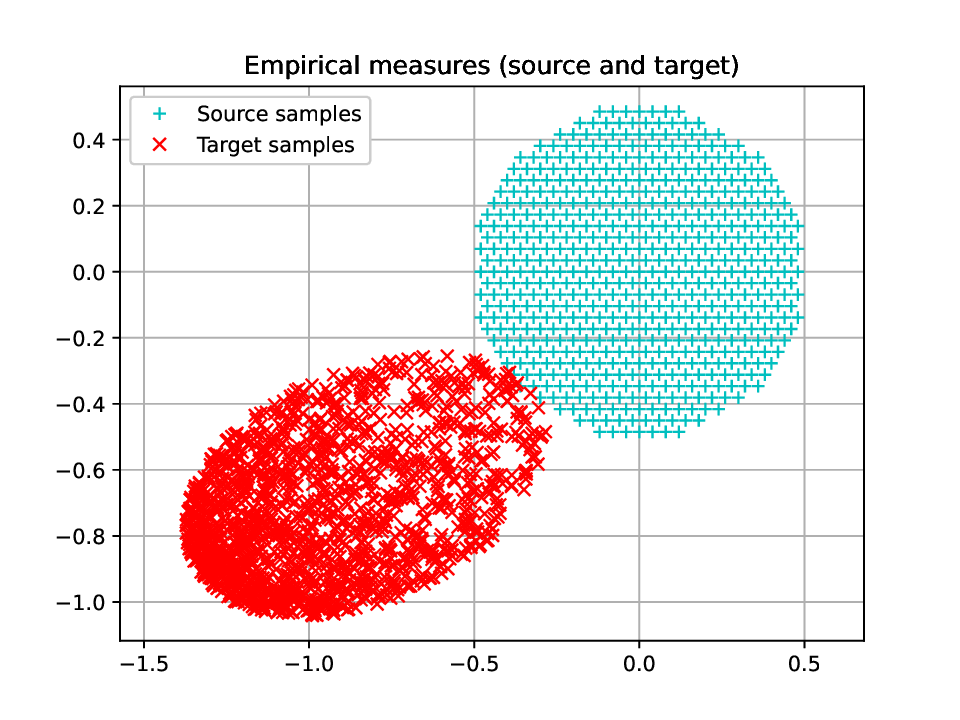}
    \includegraphics[scale=0.44]{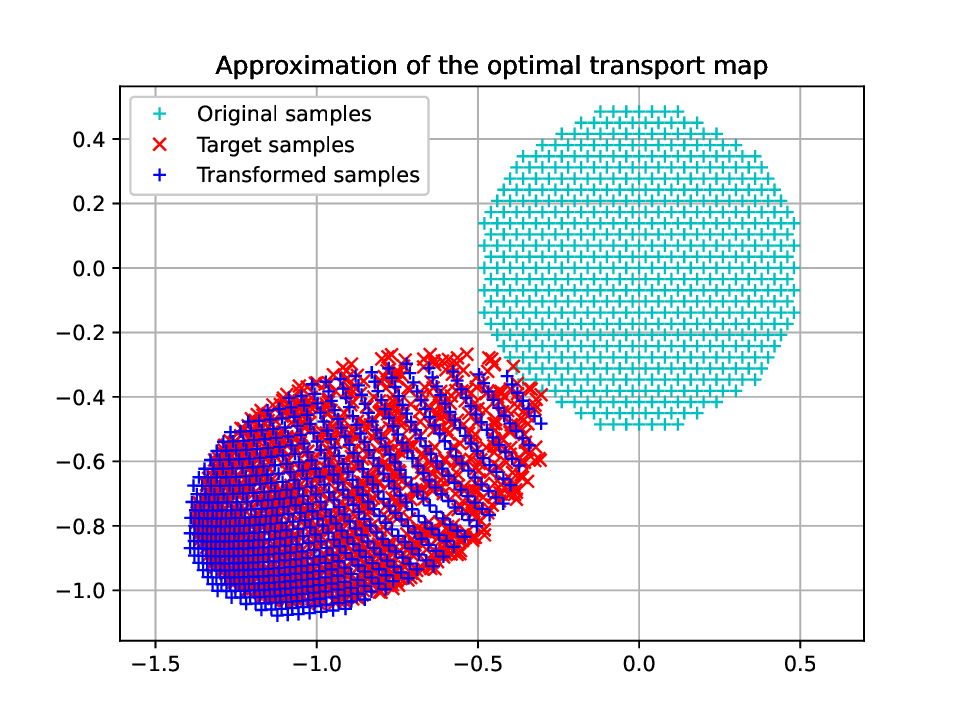}\\
    \includegraphics[scale=0.38]{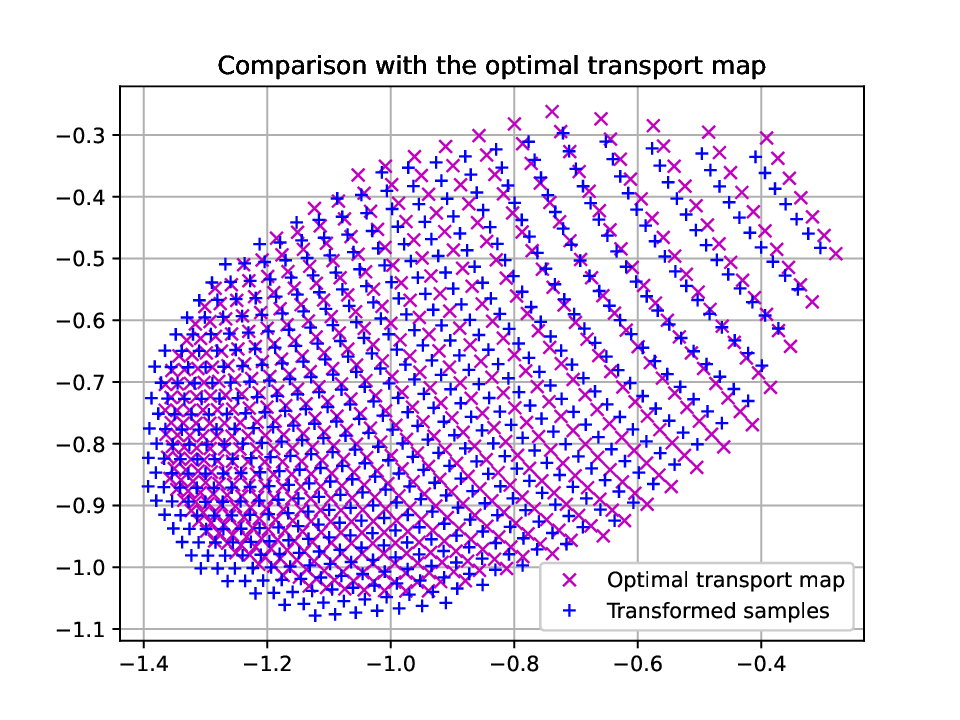}
    \caption{Approximation of the optimal transport map using samplings of the transported measure.}
    \label{fig:Experiments}
\end{figure}


\section*{Conclusions}
In this paper, we investigated the possibility of recovering the $W_2$-optimal transport map between $\mu,\nu$ as flows of linear-control neural ODEs.
We first showed that, under appropriate hypotheses on the measures $\mu,\nu$, the optimal transport map $T$ is a diffeomorphism isotopic to the identity (see Proposition~\ref{prop:isotopic}).
Hence, leveraging on the expressivity results for linear-control systems established in \cite{AS1,AS2}, in Corollary~\ref{cor:ot_map_approx} we proved that it is possible to approximate $T$ in the $C^0$-norm by means of flows of linear-control systems.
Then, we consider the case where only discrete approximations $\mu_N,\nu_N$ of $\mu,\nu$ are available, and we used a discrete $W_2$-optimal coupling $\gamma_N$ between $\mu_N,\nu_N$ to define the functional $\F^{N,\beta}$.
Then, in Theorem~\ref{thm:OT_map_appr} we proved that, if $\mu_N\weak^* \mu$ and $\nu_N\weak^* \nu$ as $N\to\infty$, then the optimal control problems involving $\F^{N,\beta}$ are $\Gamma$-convergent to a limiting functional, that concerns the approximation of $T$ in the $L^2_\mu$-norm.
Finally, we proposed an iterative algorithm based on the Pontryagin Maximum Principle for minimizing $\F^{N,\beta}$, resulting in a scheme for producing a normalizing flow.
Finally, we tested the method on an example in $\R^2$.


\subsection*{Acknowledgments} 
A.S. acknowledges partial support from INdAM--GNAMPA.
A.S. wants to thank Ismael Medina for the helpful discussions.
{The majority of this work was developed while S.F. was supported by the Lagrange Mathematics and Computation Research Center which S.F. aknowledges.}
S.F. benefits from the support of MUR (PRIN project 202244A7YL) and thanks DipE awarded to
the DIMA-Unige (CUP D33C23001110001).
S.F. wishes to thank Luca Nenna and Simone Di Marino for related discussions and suggestions.

\appendix
\section{Proofs of Section~\ref{subsec:lin-ctrl}} \label{app:lin-ctrl}
Here we prove the intermediate results needed to establish Proposition~\ref{prop:conv_flows}.
We first recall a version of the version of the Gr\"onwall-Bellman inequality.

\begin{lemma}[Gr\"onwall-Bellman Inequality] 
\label{lem:Gron}
Let $f:[a,b]\to\R_+$ be a
non-negative continuous function
and let us assume that there exists
a constant $\alpha>0$ and a non-negative 
function $\beta\in L^1([a,b],\R_+)$
such that
\[
f(s) \leq \alpha + \int_a^s\beta(\tau)f(\tau) \,d\tau
\]
for every $s\in[a,b]$. Then, for every 
$s\in[a,b]$ the following inequality holds:
\begin{equation} \label{eq:Gron_ineq}
f(s) \leq \alpha e^{\|\beta\|_{L^1}}.
\end{equation}
\end{lemma}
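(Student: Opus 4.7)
The plan is to run the standard integrating factor argument, adapted to handle the fact that $\beta$ is only $L^1$ rather than continuous. This is a classical result, and the main thing I want to be careful about is justifying differentiation almost everywhere of the integral term.

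First I would introduce the auxiliary function
\[
G(s) := \alpha + \int_a^s \beta(\tau) f(\tau)\,d\tau, \qquad s\in[a,b].
\]
Since $f$ is continuous and $\beta\in L^1$, the product $\beta f$ lies in $L^1([a,b])$, so $G$ is absolutely continuous on $[a,b]$ with $G(a)=\alpha$ and $G'(s)=\beta(s)f(s)$ for a.e.\ $s$. The hypothesis reads $f(s)\le G(s)$, and since $\beta\ge 0$ we get the differential inequality $G'(s)\le \beta(s)G(s)$ a.e.

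Next I would introduce the integrating factor $E(s):=\exp\bigl(-\int_a^s \beta(\tau)\,d\tau\bigr)$, which is absolutely continuous with $E'(s)=-\beta(s)E(s)$ a.e. Then $H:=GE$ is absolutely continuous on $[a,b]$, so it is the integral of its a.e.\ derivative; a direct computation gives
\[
H'(s)=G'(s)E(s)-\beta(s)G(s)E(s)=\bigl(G'(s)-\beta(s)G(s)\bigr)E(s)\le 0
\]
for a.e.\ $s\in[a,b]$. Integrating from $a$ to $s$ yields $H(s)\le H(a)=\alpha$, i.e.
\[
G(s)\le \alpha\exp\!\left(\int_a^s \beta(\tau)\,d\tau\right)\le \alpha\,e^{\|\beta\|_{L^1}}.
\]
Using $f(s)\le G(s)$ gives \eqref{eq:Gron_ineq}.

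The only real obstacle is the technical point that $\beta$ is merely integrable, so $G$ is absolutely continuous rather than $C^1$; this is handled by working with a.e.\ derivatives and invoking the fundamental theorem of calculus for absolutely continuous functions. Everything else is routine manipulation of exponentials, and positivity of $\beta$ is exactly what makes the monotonicity of $H$ work.
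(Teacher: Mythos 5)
Your argument is correct and complete. Note that the paper does not actually prove this lemma: it simply refers to \cite[Theorem~5.1]{EK86}, so there is no in-text argument to compare against. Your integrating-factor proof is the standard self-contained one, and you handle the only delicate point correctly: since $f$ is continuous and bounded on $[a,b]$ and $\beta\in L^1$, the function $G$ is absolutely continuous rather than $C^1$, and likewise $E$ (as the composition of the Lipschitz map $x\mapsto e^{-x}$ on a bounded range with an absolutely continuous function) and the product $H=GE$ are absolutely continuous on the compact interval, so the fundamental theorem of calculus applies to $H$ and the a.e.\ inequality $H'\leq 0$ integrates to $H(s)\leq H(a)=\alpha$. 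The final step $\exp\bigl(\int_a^s\beta\bigr)\leq e^{\|\beta\|_{L^1}}$ uses $\beta\geq 0$, which is part of the hypotheses. Nothing is missing.
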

\begin{proof}
This statement follows as a particular case
of \cite[Theorem~5.1]{EK86}. 
\end{proof}

We remind that from the Jensen inequality it follows that
\begin{equation} \label{eq:L1_L2}
    \|u\|_{L^1}:=
    \int_0^1 \sum_{i=1}^k|u_i(t)|\,dt
    \leq \sqrt{k} \|u\|_{L^2}
\end{equation}
for every $u\in\U=L^2([0,1],\R^k)$.
In the next result we show that the flows generated by controls that are equi-bounded in $L^2$ are in turn equi-bounded on compact subsets of $\R^n$.

\begin{lemma} \label{lem:bound_Phi}
For every $u\in\U$, let $\Phi_u:\R^n\to\R^n$ be the 
flow defined as in \eqref{eq:def_Phi_u},
associated to the linear-control system 
\eqref{eq:lin_ctrl} and corresponding to
the admissible control $u$.
Then, for every $r>0$ and for every $\rho>0$ there exists
$R>0$ such that
\begin{equation} \label{eq:bound_Phi}
    |\Phi_u(x)|_2 \leq R
\end{equation}
for every $x\in\R^n$ satisfying $|x|_2\leq r$ 
and for every $u\in\U$ with $\|u\|_{L^2}\leq \rho$.
\end{lemma}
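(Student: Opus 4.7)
The plan is to derive the bound by applying the Grönwall-Bellman inequality to the integral form of the Cauchy problem \eqref{eq:Cau_ctrl}, exploiting the sub-linear growth \eqref{eq:sub_growth} of the controlled vector fields $F_1,\ldots,F_k$ together with the $L^1$-bound \eqref{eq:L1_L2} on the controls.

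First, I would fix $x\in\R^n$ with $|x|_2\leq r$ and $u\in\U$ with $\|u\|_{L^2}\leq\rho$, and rewrite the solution of \eqref{eq:Cau_ctrl} in integral form:
\begin{equation*}
    x_u(t) = x + \int_0^t \sum_{i=1}^k F_i(x_u(s))\,u_i(s)\,ds.
\end{equation*}
Taking Euclidean norms, using the triangle inequality and the sub-linear growth \eqref{eq:sub_growth}, I would obtain
\begin{equation*}
    1+|x_u(t)|_2 \leq 1+|x|_2 + C \int_0^t (1+|x_u(s)|_2)\,\beta(s)\,ds,
\end{equation*}
where $\beta(s):=\sum_{i=1}^k |u_i(s)|$. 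By \eqref{eq:L1_L2}, we have $\|\beta\|_{L^1}\leq \sqrt{k}\,\|u\|_{L^2}\leq \sqrt{k}\,\rho$, so $\beta\in L^1([0,1],\R_+)$ with a uniform bound depending only on $\rho$.

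Next I would apply Lemma~\ref{lem:Gron} to the non-negative continuous function $t\mapsto 1+|x_u(t)|_2$, with $\alpha:=1+r$ and the integrable weight $C\beta$. This yields
\begin{equation*}
    1+|x_u(t)|_2 \leq (1+r)\,\exp\!\bigl(C\sqrt{k}\,\rho\bigr) \qquad \text{for every } t\in[0,1].
\end{equation*}
Evaluating at $t=1$ and setting $R:=(1+r)\exp(C\sqrt{k}\,\rho)-1$ gives the desired bound $|\Phi_u(x)|_2\leq R$, uniformly in $x\in \overline{B_r(0)}$ and in $u$ with $\|u\|_{L^2}\leq\rho$.

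I do not expect any real obstacle here: the argument is the classical Grönwall estimate, and the only ingredient specific to the linear-control setting is the passage from $\|u\|_{L^2}$ to $\|\beta\|_{L^1}$ via Jensen's inequality, which is already recorded in \eqref{eq:L1_L2}. The sub-linearity \eqref{eq:sub_growth} of the $F_i$'s plays the crucial role of making the integrand in the Grönwall step an affine (rather than just continuous) function of $|x_u(s)|_2$.
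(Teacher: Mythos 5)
Your proof is correct and follows essentially the same route as the paper's: the integral form of the Cauchy problem, the sub-linear growth bound \eqref{eq:sub_growth}, the $L^1$--$L^2$ estimate \eqref{eq:L1_L2}, and the Gr\"onwall--Bellman inequality. The only (cosmetic) difference is that you apply Gr\"onwall to $1+|x_u(t)|_2$, whereas the paper absorbs the constant term into the additive constant $\alpha$ and applies Gr\"onwall to $|x_u(t)|_2$ directly; both yield an explicit uniform $R$.
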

\begin{proof}
Let $u\in\U$ be an admissible control and 
let $x\in \R^n$ be the Cauchy datum for 
the initial-value problem \eqref{eq:Cau_ctrl}.
If we consider the curve $x_u:[0,1]\to\R^n$
that solves the Cauchy problem \eqref{eq:Cau_ctrl},
then from the sub-linear growth inequality
\eqref{eq:sub_growth} it descends that
\begin{align*}
    |x_u(t)|_2 &\leq |x|_2 +\int_0^t 
    \sum_{i=1}^k|F_i(x_u(s))|_2|u_i(s)|\, ds\\
    & \leq |x|_2 +\int_0^t 
    C(|x_u(s)|_2 +1 )
    \sum_{i=1}^k|u_i(s)|\, ds \\
    &\leq |x|_2 + \sqrt{k}C \|u\|_{L^2}
    + C\int_0^1 |x_u(s)|_2 \sum_{i=1}^k|u_i(s)|\, ds
\end{align*}
for every $t\in[0,1]$,
where we used \eqref{eq:L1_L2} in the last passage.
In virtue of Lemma~\ref{lem:Gron}, the previous 
inequality yields
\begin{equation*}
    |x_u(t)|_2 \leq \left( |x|_2 + C\sqrt{k} \|u\|_{L^2}
    \right)
    e^{\sqrt k \|u\|_{L^2}}
\end{equation*}
for every $t\in[0,1]$. In particular, using $t=1$ 
in the last inequality and setting
$R:=(r+C \sqrt k \rho)e^{\sqrt k \rho}$, we deduce
\eqref{eq:bound_Phi}.
\end{proof}

We report below the proof of Lemma~\ref{lem:Lip_flow}.

\begin{proof}[Proof of Lemma~\ref{lem:Lip_flow}]
Let $u\in\U$ be an admissible control, and let
us consider $x^1,x^2 \in \R^n$.
Let $x_u^1,x_u^2:[0,1]\to\R^n$ be the solutions of
the Cauchy problem
\eqref{eq:Cau_ctrl} corresponding to the control
$u$ and to the initial data $x^1,x^2$, respectively.
Then, using the Lipschitz-continuity condition
\eqref{eq:lipsch_fields}, we compute
\begin{align*}
    |x_u^1(t)-x_u^2(t)|_2  &\leq
    |x^1-x^2|_2 + \int_0^t 
    \sum_{i=1}^k|F_i(x_u^1(s))-F_i(x_u^2(s))|_2 |u_i(s)|
    \,ds \\
    &\leq |x^1-x^2|_2 + L\int_0^t 
    |x_u^1(s)-x_u^2(s)|_2\sum_{i=1}^k |u_i(s)|
    \,ds
\end{align*}
for every $t\in[0,1]$. Owing to Lemma~\ref{lem:Gron}
and \eqref{eq:L1_L2},
we deduce that
\begin{equation*}
    |x^1_u(t)-x_u^2(t)|_2 \leq e^{L\sqrt k \|u\|_{L^2}}|x^1 - x^2|_2
\end{equation*}
for every $t\in[0,1]$. In particular, setting $t=1$
in the last inequality, we obtain that
\begin{equation}
    |\Phi_u(x^1)-\Phi_u(x^2)|_2 \leq 
    e^{L\sqrt k \rho}|x^1-x^2|_2
\end{equation}
for every $x^1,x^2\in \R^n$ and for every $u\in \U$
such that $\|u\|_{L^2}\leq \rho$.
This proves \eqref{eq:Lip_flow}.
\end{proof}

\begin{proof}[Proof of Proposition~\ref{prop:conv_flows}]
Let $K\subset \R^n$ be a compact set.
For every $x\in K$ and for every 
$m\in \NN\cup\{\infty \}$, let $x_{u_m}:[0,1]\to\R^n$
be the solution of the Cauchy problem \eqref{eq:Cau_ctrl}
corresponding to the admissible control 
$u_m$ and with initial datum $x_{u_m}(0) =x$.
In virtue of \cite[Lemma~7.1]{S_grad}, we have that
\begin{equation*}
    \lim_{m\to\infty} \sup_{t\in[0,1]}
    |x_{u_m}(t)-x_{u_\infty}(t)|_2 =0,
\end{equation*}
which in particular implies the point-wise convergence
\begin{equation} \label{eq:point_conv_phi}
    \lim_{m\to\infty}|\Phi_{u_m}(x)-\Phi_{u_\infty}(x)|_2
    =0
\end{equation}
for every $x\in K$.
From the weak convergence
$u_m\weak_{L^2}u_\infty$ as $m\to\infty$, we deduce that
there exists $\rho>0$ such that
\begin{equation} \label{eq:bound_L2_conv_seq}
    \sup_{m\in \NN\cup\{\infty \}}
    \|u_m\|_{L^2} \leq \rho.
\end{equation}
Combining \eqref{eq:bound_L2_conv_seq} with 
Lemma~\ref{lem:bound_Phi}, we obtain that there
exists $R>0$ such that
\begin{equation} \label{eq:equi_bound}
    \sup_{x\in K}|\Phi_{u_m}(x)|_2 \leq R
\end{equation}
for every $m\in \NN\cup\{ \infty\}$. Moreover, from \eqref{eq:bound_L2_conv_seq} and Lemma~\ref{lem:Lip_flow} it follows that there exists $L'>0$ such that
\begin{equation} \label{eq:equi_Lip}
    |\Phi_{u_m}(x^1)-\Phi_{u_m}(x^2)|_2
    \leq L'|x^1-x^2|_2
\end{equation}
for every $x^1,x^2\in K$ and for every $m\in \NN \cup\{ \infty \}$. Therefore, if we consider the restrictions $\Phi_{u_m}|_K:K\to\R^n$ for every $m\in \NN\cup\{\infty\}$, from \eqref{eq:equi_bound}-\eqref{eq:equi_Lip} we deduce that the sequence of the restricted flows $(\Phi_{u_m}|_K)_{m\in\NN}$ is equi-bounded and equi-Lipschitz. 
Then, applying
Arzel\`a-Ascoli Theorem (see, e.g., \cite[Theorem~4.25]{B11}), we deduce that $(\Phi_{u_m}|_K)_{m\in\NN}$ is pre-compact with respect  to the uniform convergence. On the other hand, the point-wise convergence \eqref{eq:point_conv_phi} guarantees that the set of cluster elements of the sequence $(\Phi_{u_m}|_K)_{m\in\NN}$ is reduced to $\{ \Phi_{u_\infty}|_K \}$. This proves \eqref{eq:conv_flows} and concludes the proof.
\end{proof}


\bigskip
\bigskip
\bigskip


\begin{thebibliography}{99}

\bibitem{AgCapo}
A. Agrachev, M. Caponigro.
Controllability on the group of diffeomorphisms.
\textit{Ann. Inst. Henri Poincare (C) Anal. Non Lin\'eaire},
26(6): 2503-2509 (2009).
doi:~10.1016/j.anihpc.2009.07.003

\bibitem{AS04} A. Agrachev, Y. Sachkov.
\textit{Control Theory from the Geometric Viewpoint.}
Springer-Verlag Berlin Heidelberg (2004).


\bibitem{AS1}
A. Agrachev, A. Sarychev.
Control in the spaces of ensembles of points.
\textit{SIAM J. Control Optim.}, 58: 1579-1596 (2020)  
doi: 10.1137/19M1273049

\bibitem{AS2}
A. Agrachev, A. Sarychev.
Control on the manifolds of mappings with a view to the deep learning.
\textit{J. Dyn. Control Syst.}, 28: 989–1008 (2022).
doi: 0.1007/s10883-021-09561-2



\bibitem{AG}
L. Ambrosio, N. Gigli.
\textit{A users's guide to optimal transport.} 
In: Modelling and Optimisation of Flows on Networks Lecture Notes in Mathematics, Springer Berlin Heidelberg,
1–155 (2013).

\bibitem{ATTY}
S. Arguill\`ere, E. Tr\'elat, A. Trouv\'e, L. Younes.
Shape deformation analysis from the optimal control viewpoint.
\textit{J. Math. Pures Appl.},
104(1): 139-178 (2015).
doi:~10.1016/j.matpur.2015.02.004

\bibitem{ACB}
M. Arjovsky, S. Chintala, L. Bottou. 
Wasserstein generative adversarial network. 
\textit{International Conference on Learning Representations (ICML)} pp. 214–223, (2017).

\bibitem{ABGV_18}
G. Auricchio, F. Bassetti, S. Gualandi, M. Veneroni.  
Computing Kantorovich-Wasserstein Distances on $d$-dimensional histograms using $(d+1)$-partite graphs. 
\textit{Adv. Neural Inf. Process. Syst.}, 31 (2018).


\bibitem{AV22}
G. Auricchio, M. Veneroni. 
On the Structure of Optimal Transportation Plans between Discrete Measures. 
\textit{Appl. Math. Optim.}, 85(42) (2022). 
doi: 10.1007/s00245-022-09861-4




\bibitem{BSR18}
M. Blondel, V. Seguy, A. Rolet. 
Smooth and sparse optimal transport.
\textit{Proceedings of the 21st AISTATS, PMLR}, 84: 880–889 (2018).


\bibitem{BCFH_23}
B. Bonnet, C. Cipriani, M. Fornasier, H. Huang.
A measure theoretical approach to the mean-field maximum principle for training NeurODEs. 
\textit{Nonlinear Analysis}, 227:113-161 (2023).
doi: 10.1016/j.na.2022.113161



\bibitem{B11}
H. Brezis.
\textit{Functional Analysis, Sobolev Spaces and Partial Differential Equations.}
Universitext, 
Springer New York NY (2011).
doi: 10.1007/978-0-387-70914-7

\bibitem{Brualdi}
R.A. Brualdi.
\textit{Combinatorial Matrix Classes.}
Cambridge University Press (2006).
doi: 10.1017/CBO9780511721182

\bibitem{Ca1}
L.A. Caffarelli,
Some regularity properties of solutions of {M}onge {A}mp\`ere equation.
\textit{Comm. Pure Appl. Math.}, 44 (1991).
doi: 10.1002/cpa.3160440809


\bibitem{Ca2}
L.A. Caffarelli,
Boundary regularity of maps with convex potentials--{II}.
\textit{Ann. Math.} 144(3): 453-496 (1996).
doi: 10.2307/2118564

\bibitem{CR12}
G. Canas, L. Rosasco. 
Learning probability measures with respect to optimal transport metrics. 
\textit{Adv. Neur. Inf. Process Syst.},
25 (2012).


\bibitem{Capo}
M. Caponigro. 
Families of vector fields which generate the group of diffeomorphisms. 
\textit{Proc. Steklov Inst. Math.}, 
270: 141–155 (2010). 
doi: 10.1134/S0081543810030107

\bibitem{CRBD18}
R.T.Q. Chen, Y. Rubanova, J. Bettencourt, D.K. Duvenaud.
{Neural Ordinary Differential Equations}. 
\textit{Adv. Neur. Inf. Process Syst.},
31 (2018).


\bibitem{CFS_23}
C. Cipriani, M. Fornasier, A. Scagliotti.
From NeurODEs to AutoencODEs: a mean-field control framework for width-varying neural networks. 
\textit{Eur. J. Appl. Math.}, published online (2024).
doi: 10.1017/S0956792524000032


\bibitem{Cut13}
M. Cuturi.
Sinkhorn Distances: Lightspeed Computation of Optimal Transport.
\textit{Adv. Neur. Inf. Process Syst.},
26 (2013).

\bibitem{D93}
G. Dal Maso. 
{\it An Introduction to $\Gamma$-convergence.}
Progress in nonlinear differential equations and their applications, Birkh\"auser
Boston MA (1993).

\bibitem{LGL_21}
L. De Lara, A. González-Sanz, J.M. Loubes. 
A Consistent Extension of Discrete Optimal Transport Maps for Machine Learning Applications. 
\textit{arXiv preprint}, arXiv:2102.08644 (2021).


\bibitem{DPHF}
G. De Philippis, A. Figalli.
{The Monge--Amp{\`e}re equation and its link to optimal transportation.} 
\textit{Bull. Am. Math. Soc.},
51(4): 527-580 (2014). 


\bibitem{EGPZ}
C. Esteve, B. Geshkovski, D. Pighin, E. Zuazua.
{Large-time asymptotics in Deep Learning}. 
\textit{arXiv preprint}, arXiv:2008.02491 (2020).

\bibitem{EK86}
S. Ethier, T. Kurtz.
{\it Markov Processes: Characterization and Convergence.} Wiley series in probability and 
statistics,
John Wiley \& Sons New York (1986). 

\bibitem{POT_library}
R. Flamary et al.
POT Python Optimal Transport library,
\textit{J. Mach. Learn. Res.}, 22(78): 1-8 (2021).

\bibitem{FHS23}
M. Fornasier, P. Heid, G.E. Sodini.
Approximation Theory, Computing, and Deep Learning on the Wasserstein Space
\textit{arXiv preprint}, arXiv:2310.19548
(2023).


\bibitem{WE17}
W. E.
A proposal on machine learning via dynamical systems.
\textit{Comm. Math. Stat.}, 1(5):1-11  (2017).
doi: 10.1007/s40304-017-0103-z



\bibitem{FGOP_20}
C. Finlay, A. Gerolin, A.M. Oberman, A.A. Pooladian. 
Learning normalizing flows from Entropy-Kantorovich potentials.
\textit{arXiv preprint}, arXiv:2006.06033 (2020).

\bibitem{haber17}
E.~Haber, L.~Ruthotto.
Stable architectures for deep neural networks.
\textit{Inverse problems}, 34(1) (2018).
doi: 10.1088/1361-6420/aa9a90





\bibitem{H80} 
J. Hale.
{\it Ordinary Differential Equations.}
Krieger Publishing Company (1980).


\bibitem{Rigollet_21}
J.-C. Hütter, P. Rigollet.
Minimax estimation of smooth optimal transport maps.
\textit{Ann. Stat.}, 
49(2):1166-1194 (2021).
doi: 10.1214/20-AOS1997


\bibitem{Koby_nf_20}
I. Kobyzev, S.J. Prince, M.A. Brubaker.
Normalizing flows: An Introduction and Review of Current Methods.
\textit{IEEE Trans. Pattern Anal. Mach. Intell.},
43(11):3964-3979 (2020).
doi: 10.1109/TPAMI.2020.2992934


\bibitem{Thorpe_17}
S. Kolouri, S.R. Park, M. Thorpe, D. Slepcev, G.K. Rohde.
Optimal mass transport: Signal processing and machine-learning applications. 
\textit{IEEE signal processing magazine},
34(4):43–59, (2017).
doi: 10.1109/MSP.2017.2695801

\bibitem{kuehn_23}
C. Kuehn, S.-V. Kuntz.  
Embedding Capabilities of Neural ODEs. 
\textit{arXiv preprint}, arXiv:2308.01213 (2023).

\bibitem{LMM21}
D.A. Lorenz, P. Manns, C. Meyer. 
Quadratically Regularized Optimal Transport. 
\textit{Appl. Math. Optim.}, 83: 1919–1949 (2021). 
doi: 10.1007/s00245-019-09614-w


\bibitem{MBNW_21}
T. Manole, S. Balakrishnan, J. Niles-Weed, L. Wasserman. 
Plugin estimation of smooth optimal transport maps. 
\textit{Ann. Stat.}, 52(3): 966-998 (2024).
doi: 10.1214/24-AOS2379


\bibitem{MSS21} 
Q. M\'{e}rigot, F. Santambrogio, C. Sarrazin.
Non-asymptotic convergence bounds for Wasserstein approximation using point clouds.
\textit{Adv. Neur. Inf. Process Syst.},
34:12810-12821 (2021).


\bibitem{MDBCR_23}
G. Morel, L. Drumetz, S. Bena\"ichouche, N. Courty, F. Rousseau.
Turning Normalizing Flows into Monge Maps with Geodesic Gaussian Preserving Flows. 
\textit{Transactions on Machine Learning Research}, (2023).

\bibitem{Papa_nf_21}
G. Papamakarios, E. Nalisnick, D.J. Rezende, S. Mohamed, B. Lakshminarayanan. 
Normalizing flows for probabilistic modeling and inference. 
\textit{J. Mach. Learn. Res.}, 22(1):2617-2680 (2021).


\bibitem{PCFH_16}
M. Perrot, N. Courty, R. Flamary, A. Habrard.
Mapping estimation for discrete optimal transport. 
\textit{Adv. Neur. Inf. Process Syst.}, 29
(2016).

\bibitem{PeyCut}
G. Peyr\'e, M. Cuturi.
\textit{Computational Optimal Transport: With Applications to Data Science.}
Foundations and Trends in Machine Learning\textregistered, 11(5-6):355-607 (2019). 
doi: 10.1561/2200000073


\bibitem{RKB}
L. Rout, A. Korotin, E. Burnaev. 
Generative modeling with optimal transport maps.
\textit{International Conference on Learning Representations (ICLR)}, (2022).

\bibitem{RBZ_nODE}
D. Ruiz-Balet, E. Zuazua. 
Neural ODE control for classification, approximation, and transport.
\textit{SIAM Review}, 65(3):735-773 (2023).
doi: 10.1137/21M141143

\bibitem{RBZ_NF}
D. Ruiz-Balet, E. Zuazua. 
Control of neural transport for normalizing flows. 
\textit{J. Math. Pures Appl.}, (2023).
doi: 10.1016/j.matpur.2023.10.005

\bibitem{SS80}
Y. Sakawa, Y. Shindo.
{On global convergence of an algorithm for optimal 
control.}
{\it IEEE Trans. Automat. Contr.},
25(6):1149-1153 (1980).

\bibitem{Sant} F. Santambrogio.
\textit{Optimal transport for applied mathematicians.}
{Birk\"auser, NY} (2015).

\bibitem{S_grad} A. Scagliotti.
{A gradient flow equation for optimal control problems
with end-point cost.}
\textit{J. Dyn. Control Syst.}, 29: 521–568 (2023). 
doi: 10.1007/s10883-022-09604-2

\bibitem{S_deep} 
A. Scagliotti.
{Deep Learning approximation of diffeomorphisms via
linear-control systems.}
{\it Math. Control Rel. Fields}, 13(3): 1226-1257 (2023). 
doi: 10.3934/mcrf.2022036.

\bibitem{S_ens} A. Scagliotti.
{Optimal control of ensembles of dynamical systems.}
{\it ESAIM: Control, Optim., Calc. Var.}, (2023).
doi: 10.1051/cocv/2023011

\bibitem{Via12}
FX. Vialard, L. Risser, D. Rueckert, C.J. Cotter.
Diffeomorphic 3D Image Registration via Geodesic Shooting Using an Efficient Adjoint Calculation. 
\textit{Int J Comput Vis} 
97: 229–241 (2012). 
doi:~10.1007/s11263-011-0481-8


\bibitem{Vi}
C.Villani.
{\em Optimal transport: old and new},
Vol 338, Springer-Verlag, Berlin (2008).


\bibitem{W34}H. Whitney.
{Analytic extensions of differentiable functions defined in closed sets.}
\textit{Trans. Amer. Math. Soc.}, 
36: 63-89 (1934).

\bibitem{Y19}
L. Younes.
\textit{Shapes and Diffeomorphisms (II edition).}
Springer Berlin, Heidelberg (2019).
doi:~10.1007/978-3-662-58496-5



\end{thebibliography}
\end{document}